\def\tends{\rightarrow}
\def\to{\rightarrow}
\def\E{\mathbb{E}}
\def\PP{\mathbb{P}}
\def\N{{\cal{N}}}
\DeclareMathOperator{\sign}{sign}
\DeclareMathOperator{\supp}{supp}
\DeclareMathOperator{\Var}{Var}
\newtheorem{theorem}{Theorem}         
\newtheorem{lemma}{Lemma}             
\newtheorem{definition}{Definition}[section]   
\newtheorem{corollary}{Corollary}     
\newtheorem{remark}{Remark}
\newtheorem{example}{Example}[section]
\begin{document}

\begin{frontmatter}

\title{Besov regularity of functions with sparse random wavelet coefficients}
\runtitle{Besov regularity of stochastic wavelet expansions}


\author{\fnms{Natalia} \snm{Bochkina}\ead[label=e1]{N.Bochkina@ed.ac.uk}}
\address{\printead{e1}}

\affiliation{University of Edinburgh and Maxwell Institute, UK}

\address{School of Mathematics, University of
Edinburgh, Edinburgh EH9 3JZ, UK.
}

\runauthor{N.A. Bochkina }

\begin{abstract}
This paper addresses the problem of regularity properties of functions represented as an expansion in a wavelet basis with random coefficients in terms of  finiteness of their Besov norm with probability 1. Such representations are used to specify a prior measure in Bayesian nonparametric wavelet regression. Investigating regularity of such functions is an important problem since the support of the posterior measure does not include functions that are not in the support of the prior measure, and therefore determines the functions that are possible to estimate using specified Bayesian model. We consider a more general parametrisation than has been studied previously which allows to study a priori regularity of functions under a wider class of priors. We also emphasise the difference between the abstract stochastic expansions that have been studied in the literature, and the expansions actually arising in nonparametric regression, and show that the latter cover a wider class of functions than the former. We also extend these results to stochastic expansions in an overcomplete wavelet dictionary.
\end{abstract}

\begin{keyword}[class=AMS]
\kwd[Primary ]{46N30}
\kwd{42C40}
\kwd[; secondary ]{62G08}
\end{keyword}

\begin{keyword}
\kwd{a priori regularity}
\kwd{Bayesian inference}
\kwd{Besov spaces}
\kwd{nonparametric regression}
\kwd{overcomplete wavelet dictionary}
\kwd{Poisson process}
\kwd{stochastic expansion}
\kwd{wavelet basis}
\end{keyword}
\end{frontmatter}


\section{Introduction}

This study is motivated by Bayesian wavelet estimators of the unknown function $f$ in the nonparametric regression model
\begin{equation}\label{eq:regression}
Y_i = f(i/n) + \varepsilon_i,\quad i=1,\ldots, n,
\end{equation}
where $\varepsilon_i$ are independent random variables with zero mean.
The key property that makes wavelet models a valuable tool is that the unknown function
has a sparse wavelet representation, i.e. $f$  is well approximated by
a function with a relatively small proportion of nonzero wavelet coefficients.  Within a Bayesian context,
the notion of sparsity is naturally modeled by a prior distribution for
the wavelet coefficients of $f$ that includes a point mass at $0$ and some other, often a continuous, distribution to model non-zero values of wavelet coefficients.
Such Bayesian models have been widely used to derive appropriate wavelet estimators using posterior mean, median, mode and Bayes factor estimators
\citep[e.g.][]{cly-geo-99, p:abr-sap-silv-98, p:vid-98} 
that perform well in practice and achieve  minimax rate of convergence, both under pointwise and $L^p$ losses, e.g. \citet{p:pensky-2003},  \citet{p:pensky-sap-07},  \citet{p:abr-point-opt-07}, \citet{p:boch-sap-09}, \citet{p:joh-sil-05}.
 For  a survey of Bayesian methods in wavelet nonparametric regression, see \citet{b:vid-survey}.

An important question that arises in selecting a prior distribution in the wavelet sequence space (i.e. in the space of wavelet coefficients) is to understand the corresponding a priori assumption on the function $f$. This question was first raised by \citet{p:abr-sap-silv-98}  who studied it for an independent mixture prior of the  atom at zero and a Gaussian distribution, and derived a necessary and sufficient condition for function $f$ to belong to a Besov space $B_{p,q}^s$ with probability 1. However, this type of prior distribution results in a suboptimal rate of convergence, since, as \citet{p:joh-sil-05} shows, the tail of the prior distribution should not be heavier than exponential to achieve the optimal convergence rate in $L^p$ norm.
 \citet{p:pensky-2003}, whilst studying optimality of
Bayesian wavelet estimators with normal and heavy tailed mixture
prior distributions over the Besov spaces, provided a {\it necessary}
condition for a function to belong to Besov spaces a priori with
probability 1.

Thus, the open question remains to identify the necessary and sufficient condition for a random function whose wavelet coefficients have the mixture distribution with non-Gaussian tails and for the cases $p=\infty$ or $q=\infty$ (that includes H\"older spaces), to belong to a Besov space almost surely. In practice it is important from two perspectives: visualisation of the functions that are generated by any chosen prior via simulation, and secondly, to specify a prior distribution that will include functions with required smoothness. This is a crucial property in proving consistency of Bayesian  wavelet estimators since it is impossible to achieve consistency over the Besov space that is not in the support of the prior distribution.
Bayesian wavelet estimators are also an important tool in constructing optimal estimators (in the minimax sense) in other nonparametric problems (e.g. \citet{Ray_Inv} in inverse problems  and  \citet{p:NicklGine2011} in density estimation), so the choice of the appropriate prior distribution for a function of given smoothness can provide a handle on creating an appropriate Bayesian estimator in many nonparametric problems.

The problem that has been studied by the authors quoted above is formulated as a prior distribution on all wavelet coefficients. However, in nonparametric wavelet regression the prior distribution  is specified  for the first $n$ wavelet coefficients at most, and asymptotic properties of Bayesian inference are studied in the limit $n\to\infty$. We also study a prior regularity properties of functions under this prior measure that is relevant to the support of the posterior distribution in nonparametric regression problem. In particular, we show that given the same hyperparameters, this prior measure includes a wider class of functions than the previously studied infinite dimensional prior model.

A similar question about the characterisation of the regularity properties of $f$ and a probability model for its continuous wavelet transform.
Continuous wavelet transform provides a greater flexibility of modelling functions of interest since it does not have dyadic restrictions on the indices of the
wavelet function. Since the wavelet coefficients are related to the size of the discontinuity of a function at the corresponding frequency and location, continuous wavelet transform is particularly well suited to practical problems where the frequencies of the non-zero wavelet coefficients are of interest,
and it finds its application to data in meteorology, oceanography and medicine, e.g. \citet{p:CWT-Ouergli}, \citet{p:CWT-Polygiannakis}, \citet{p:CWT-Mochimaru}. \citet{p:abr-sap-silv-00} derived a necessary and sufficient condition for a function with a random sample of continuous wavelet coefficients whose distribution has Gaussian distribution, to belong to a Besov space with probability 1. We derive such a condition for a model with more general assumptions on the distribution of the wavelet coefficients.

Using Besov spaces to characterise the regularity properties of the function is most
 natural since the characterisations are expressed by simple conditions on wavelet coefficients. The Besov spaces
 consist of functions of different levels of regularity and include, in
particular, the well-known Sobolev  $W_2^s = B_{2,2}^s$ and
H\"older $\mathbb{C}^s = B_{\infty,\infty}^s$ spaces of smooth
functions, and also less traditional spaces, like the space of
functions of bounded variation, sandwiched between $B_{1,1}^1$ and
$B_{1,\infty}^1$ and spaces of spatially inhomogeneous functions $B_{pq}^s$ with $p\in [1,2)$.
 For more details see  \citet{b:mey-92} and
 \citet{b:tri-83}.   Besov spaces and Besov balls have been widely used in wavelet
context \citep[e.g.][]{p:don-joh-94a, p:don-joh-94b, p:don-joh-95, p:abr-sap-silv-98, p:abr-sap-silv-00}  
due to containing spatially non-homogeneous functions and functions of finite total variation which can be represented in
terms of wavelets. Regularity of the solutions  of some stochastic PDEs is studied in terms of Besov norm by using their wavelet representation \citep{p:BesovStoch, p:BesovPDE, b:RegStochasticPDE}. \citet{p:don-joh-95} discuss the relevance of Besov spaces for various scientific
problems.

The main aim of this paper is to derive is to derive necessary and sufficient conditions on the parameters of the prior distribution for a given distribution of the nonzero wavelet coefficients under the orthogonal wavelet transform, without assuming any particular form of the parameters. We derive these conditions in two settings: a) the prior distribution is specified for all wavelet coefficients (infinite dimensional case) that is relevant for infinite dimensional problems, such as studying regularity of the solutions of stochastic PDEs, and b) the prior distribution is specified in the same way as in the nonparametric regression, i.e. the mixture prior is specified only for a finite number of wavelet coefficients that depends on the sample size $n$, and the regularity is studied as the sample size tends to infinity. It turns out that a modification of the proof given by \citet{p:abr-sap-silv-00} allows to apply the derived results to the case of the continuous wavelet transform, so we state these conditions here.

The paper is organised as follows. The definitions of the orthonormal and continuous wavelet transforms and of the Besov spaces are given in Section~\ref{sec:DefWTandBS}. In Section~\ref{sec:regularDWT} we extend the result of  \citet{p:abr-sap-silv-98} -- using the same form of the parameters -- to the case of  a more general distribution of the nonzero wavelet coefficients, and the main result for arbitrary form of the parameters and general distribution is given in Section~\ref{seq:General}. In Section~\ref{sec:regression} we state such a result for the prior distributions used in nonparametric regression.  
And finally, in Section~\ref{sec:CWT} we state  necessary and sufficient conditions for a stochastic model for a continuous wavelet transform without assuming any particular form of the continuous analogues of the parameters defining the model. The proofs are given in the appendix. As a part of the proof, we derive a strong law of large numbers for a sequence of random variables of random length (Lemma~\ref{lem:LLN} in the appendix).

The results derived here under the nonparametric regression model can also be used to study a posteriori regularity properties of estimators of function $f$ where the hyperparameters depend on the observed data, since the posterior distribution of wavelet coefficients for fixed hyperparameters is of the same form as the prior, with a different distribution $H$ which depends on the data. This question is a subject of a separate investigation.

\section{Wavelet transform and Besov spaces}\label{sec:DefWTandBS}


\subsection{Orthogonal wavelet transform}\label{sec:wavelet}

A wavelet basis is determined by a wavelet and a scaling function
$\psi(x)$ and $\phi(x)$ which we assume to be of regularity $r$
and have periodic boundary conditions on $[0, 1]$. A set of
functions $\{\phi_{j_0,m}(x), \psi_{jk}(x), j \geqslant j_0\geqslant 0,
k=0,\dots,2^j-1,
m=0,\dots,2^{j_0}-1\}$ forms an orthonormal basis of $L^2([0,1])$ for any $j_0\in \mathbb{Z}_+$
where the functions $\psi_{jk}(x)$ are derived from the wavelet
function by dilation and translation:
$\psi_{jk}(x)=2^{j/2}\psi(2^{j} x - k)$.

Since wavelets form an orthogonal basis of $L^2([0,1])$, any $f\in L^2([0,1])$ can be written as
\begin{equation}\label{eq:WaveletFn}
f(x) = \sum_{m=0}^{2^{j_0}-1} u_{j_0,m}\phi_{j_0,m}(x) + \sum_{j=j_0}^{\infty} w_{jk} \psi_{jk}(x),
\end{equation}
where the wavelet and the scaling coefficients are given by
\begin{eqnarray}\label{eq:WCdef}
w_{jk} &=& \int_{0}^1 f(x) \psi_{jk}(x) dx, \quad j\geqslant j_0,
\quad k=0,\dots, 2^j-1,\\
u_{j_0,m} &=& \int_{0}^1 f(x) \phi_{jk}(x) dx, \quad m=0,\dots, 2^{j_0}-1
\end{eqnarray}
due to orthogonality of wavelet basis. Examples of wavelet functions can be found in \citet{b:vid-99}.


\subsection{Probability model based on orthonormal wavelet transform}

Now we consider a random function $f$ with an orthonormal wavelet decomposition~\eqref{eq:WaveletFn} where the wavelet coefficients $w_{jk}$ are random and have the following density with respect to the Lebesgue measure:
\begin{equation}\label{eq:prior}
w_{jk} \sim p_j(w) = (1-\pi_j) \delta_0(w) + \pi_j \tau_j h(\tau_j w), \quad \text{independently}
\end{equation}
where $\delta_0(w)$ is Dirac delta function, $\pi_j \in [0,1]$, $\tau_j>0$  and distribution with density $h$ with respect to the Lebesque measure has cumulative distribution function $H(w)$ which is continuous at $w=0$
for identifiability. Note that we do not make assumptions of
symmetry or continuity of distribution $H$  (except at $0$).


Before addressing the problem of regularity of random function $f$, we study the number of its non-zero wavelet coefficients.


Denote the number of nonzero wavelet coefficients at level $j$ by
${\cal{N}}_j$, and the number of all nonzero wavelet coefficients
by ${\cal{N}} = \sum_{j=j_0}^{\infty} {\cal{N}}_j$. To study whether $\N_j$ and $\N$ are finite, we apply an argument similar to that of \citet{p:abr-sap-silv-98}. It follows from the
probabilistic model (\ref{eq:prior}) that ${\cal{N}}_j$ has
binomial distribution with parameters $2^j$ and $\pi_j$. Therefore
the expected number of non-zero wavelet coefficients at level $j$
of wavelet decomposition is $2^j\pi_j$.

In case $\sum_{j=j_0}^{\infty } 2^j\pi_j < \infty$, number
$\cal{N}$ of the non-zero wavelet coefficients at all
decomposition levels is finite almost surely because its
distribution is proper (this can be shown, for instance, using the
characteristic function of distribution of $\cal{N}$).
In terms of regularity properties, this implies that  a function
with such wavelet coefficients almost surely belongs to the same
Besov spaces as the wavelet function, i.e. to the Besov spaces
with parameters $0<s<r$, $1\leqslant p, q \leqslant \infty$ where
$r$ is the regularity of the wavelet function, since $f$ is a
finite linear combination of wavelet functions with probability 1.

If $\sum_{j=j_0}^{\infty } 2^j\pi_j = \infty$, we consider two separate cases: a) $2^j\pi_j\tends const$, the number of non-zero wavelet coefficients at high resolution levels $j$ asymptotically has the same expectation (and the same
distribution), and b) $2^j\pi_j \tends \infty$ as $j\tends\infty$,
i.e. the expected value of $\N_j$ increases to infinity.


\subsection{Besov spaces}\label{sec:BesovDef}

 Besov spaces $B_{p,q}^s$ of functions can be characterised by the Besov sequence norm $b^s_{p,q}$ of the wavelet
coefficients of its elements for $0<s<r$ where $r$ is regularity of wavelet function \citep{p:don-joh-94a, p:don-joh-94b, p:don-joh-95,
p:don-joh-98}. Explicit definition of Besov spaces can be found in \citet{b:tri-83}.

Define Besov sequence norm $b^s_{p,q}$ on
wavelet coefficients for $p, q \geqslant 1$, $s>0$ and $s' = s
+\frac 1 2 - 1/p$, as follows:
\begin{eqnarray}\label{eq:normequ}
||\mathbf{w}||_{b^s_{p,q}} &=& ||\mathbf{u}_{j_0}||_p + \left\{
\sum_{j=j_0}^{\infty} 2^{j s' q} ||\mathbf{w}_{j}||_p^q
\right\}^{1/q},
\quad 1 \leqslant q < \infty,\\
||\mathbf{w}||_{b^s_{p,\infty}} &=& ||\mathbf{u}_{j_0}||_p+ \sup_{j
\geqslant j_0} \left\{ 2^{js'} ||\mathbf{w}_{j}||_p \right\}, \quad q
= \infty,
\end{eqnarray}
where $\mathbf{u}_{j_0}= (u_{j_0, 0}, u_{j_0, 1}, \dots, u_{j_0,
2^{j_0}-1} )$ is a vector of scaling coefficients at level $j_0$,
vectors $\mathbf{w}_j=(w_{j, 0}, w_{j, 1}, \dots, w_{j, 2^j-1} )$
consist of wavelet coefficients at level $j$ for $j \geqslant
j_0$, and the vector $\mathbf{w}=(\mathbf{u}_{j_0}, \mathbf{w}_{j_0},
\mathbf{w}_{j_0+1},\dots)$ is the union of these vectors, i.e. the
complete set of wavelet coefficients.

The key property of this
norm defined on wavelet coefficients is that it is equivalent to
the Besov norm $B_{p,q}^s [0, 1]$ of the corresponding function
provided that the regularity of the wavelet function $r$ is such
that $r > s > 0$ \citep[Theorem 2]{p:don-joh-98}.

\section{Orthogonal wavelet transform with simple parametrisation}\label{sec:regularDWT}

In this section we connect the probabilistic model \eqref{eq:prior} for wavelet
coefficients of the orthogonal wavelet transform
with regularity properties of the function with such coefficients,
expressed in terms of the Besov norm defined in Section
\ref{sec:BesovDef}. Since the coefficients are random, these results
hold with probability 1.

\subsection{Assumptions on the prior}\label{sec:AssumptionH}

Consider the probabilistic model \eqref{eq:prior} for wavelet coefficients with the following parametrisation considered by \citet{p:abr-sap-silv-98}:
\begin{equation}\label{eq:hyper0}
\tau_j^2 = 2^{-\alpha j}C_1, \quad \pi_j=\min(1, 2^{-\beta j}C_2),
\end{equation}
where $C_1$ and $C_2$ are positive constants possibly dependent on
sample size $n$, $\alpha \geqslant 0$, $\beta \geqslant 0$, $\alpha+\beta >0$.
Such choice is motivated by the sparsity property of wavelet transform, i.e. that in most
cases a function can be well approximated by a finite number of
non-zero wavelet coefficients, therefore parameters $\tau_j^2$ and
$\pi_j$ are chosen to tend to zero as the decomposition level $j$
tends to infinity.
To visualise the effect of $\alpha$ and $\beta$ on the
regularity of the function, see simulation study of \citet{p:abr-sap-silv-98} for different values of $\alpha$ and
$\beta$ with $H$ being normal distribution.

In case $p=\infty$, the norm $||w_j||_p$ is the maximum of the absolute values of the wavelet coefficients at level $j$, we need to specify tail behaviour of $H$. We consider two types of distributions that belong to the domain of the attraction of two classes of ``max-stable'' distributions: $e^{-e^{-x}}$ and $e^{-x^{-\ell}}$, $x>0$, $\ell >0$ \citep[see][]{b:led-83}. The two considered types of attraction apply in the case the upper point of distribution of $|w_{jk}|$ is infinity which is typically the case in case of prior distribution on wavelet coefficients.

\begin{definition}\label{def:EVT} 1. Distribution with cumulative distribution function $F$ belongs to the domain of attraction $D(e^{-e^{-x}})$ if there exists function $g(x)$ such that  for any $x\in \mathbb{R}$
$$
\lim_{t\to +\infty} \frac{1 - F(t+xg(t))}{1 - F(t)} = e^{-x}.
$$

2. Distribution with cumulative distribution function $F$ belongs to the domain of attraction $D(e^{-x^{-\ell}})$, $\ell >0$, if for any $x>0$
$$
\lim_{t\to +\infty} \frac{1 - F(tx)}{1 - F(t)} = x^{-\ell}.
$$
\end{definition}
In the first case, one can take $g(x) = (1 - F(x))/F'(x)$. For example, normal distribution $N(0, \sigma^2)$ belongs to the domain of attraction $D(e^{-e^{-x}})$, as well as a distribution with cumulative distribution $F(x) = 1 - e^{-|x|^m}$ ($m>0$) which has $g(x) =  x^{-(m-1)}/m$. On the other hand, distributions with a polynomial tail, such as Pareto or $t$ distribution, belong to the domain of attraction $D(e^{-x^{-\ell}})$.

Fix some values $1 \leqslant p,q \leqslant\infty$ (parameters of Besov space $B_{pq}^s$). Then the key assumptions about
the distribution with cumulative distribution function (c.d.f.) $H$ are given below.

\par
Assumption H:
{\it
Suppose that random variable $\xi$ has c.d.f. $H$.
\begin{enumerate}
\item $0\leqslant \beta < 1$, $1 \leqslant p < \infty$, $1 \leqslant q
\leqslant \infty$: assume that $\E |\xi|^p < \infty$.
\item { $0\leqslant \beta < 1$, $p = \infty$, $1 \leqslant q
\leqslant \infty$: assume that $H_+(x) \stackrel{def}{=} H(x)-H(-x)$ is of
one of the following types:}
\begin{enumerate}
\item\label{a} {  $H_+(x) \in D(e^{-x^{-\ell}})$, $\ell>0$; if $q<\infty$, assume that
$\ell>q$; }
\item\label{b} { $1-H_+(x)\leq c_m e^{-(\lambda
x)^m} $ for large enough $x>0$, and $m, \lambda, c_m>0$.}
\end{enumerate}
\item $\beta = 1$, $1 \leqslant p \leqslant \infty$, $1 \leqslant q < \infty$:  assume that
$\E |\xi|^q < \infty$.
\item $\beta = 1$, $1 \leqslant p \leqslant \infty$, $q = \infty$:
assume that $\exists \epsilon>0$ such that $$\E [\log(|\xi|)
I(|\xi|>\epsilon)]<\infty.$$

\end{enumerate}
}

Note that in cases 1, 3, 4 it suffices to know the finiteness of
moments of distribution $H$, i.e. it must have a finite absolute
moment of some order greater or equal to $1$. But in case 2
($p=\infty$, $0\leqslant \beta <1$) it is necessary to know the tail behaviour
of the distribution. This is due
to different asymptotic distribution of the maximum of a large
number of independent identically distributed random variables
with different tail behaviour (see proof of Theorem \ref{th:mainBesov} in
Appendix~\ref{seq:ProofDWT}). Also, we can see that cases
$0\leqslant \beta <1$ and $\beta=1$ that correspond to equal and
increasing to infinity expected number of nonzero wavelet coefficients at different decomposition levels, require
different assumptions on distribution $H$.

\subsection{Regularity properties}\label{sec:mainBesov}

Now we formulate the necessary and sufficient conditions on the hyperparameters $\alpha$ and $\beta$ for function $f$ to belong to a Besov space.

To distinguish between cases of distributions with polynomial and exponential tails stated in Assumption H in Section~\ref{sec:AssumptionH}, we
introduce an auxiliary variable:
\begin{displaymath}\label{eq:defDeltaH}
\delta_H = \left\{ \begin{array}{ll} \frac{1-\beta}{l}, & \quad H
\quad \text{has polynomial tail and }
p=\infty, \\
0, & \quad \text{otherwise}.
\end{array} \right.
\end{displaymath}

Now we can formulate the criterion.

\begin{theorem}\label{th:mainBesov}
Suppose that $\psi$ and $\phi$ are wavelet and scaling functions of regularity $r$. 
Consider function $f$ and its wavelet transform \eqref{eq:WaveletFn} under prior \eqref{eq:prior} satisfying Assumption H under parametrisation \eqref{eq:hyper0}.

Then, for any fixed value of $||u_{j_0}||_p$ and for any $s\in (0,r)$, $f \in B^s_{p,q}$ almost surely if and only if
\begin{eqnarray*}
 s  &\leq&   \frac{\alpha-1} 2 +\frac{\beta} p, \quad \quad \quad \text{if } \,\, 0\leqslant
\beta < 1, \, p<\infty,  \, q=\infty,\\
 s &<&   \frac{\alpha-1} 2 +\frac{\beta} p -\delta_H,\quad \text{otherwise}\\
\end{eqnarray*}

\end{theorem}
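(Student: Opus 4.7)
The plan is to use the wavelet characterisation of Besov spaces from Section~\ref{sec:BesovDef} to reduce $f\in B_{p,q}^s$ almost surely to the almost-sure convergence of $\sum_{j\ge j_0} 2^{js'q}\|\mathbf{w}_j\|_p^q$ (or the a.s.\ boundedness of $\sup_j 2^{js'}\|\mathbf{w}_j\|_p$ when $q=\infty$); the scaling coefficients are fixed and drop out of the analysis. Conditionally on the random support at level $j$, of size $\mathcal{N}_j\sim\mathrm{Binomial}(2^j,\pi_j)$, the nonzero coefficients $w_{jk}$ are i.i.d.\ with common law derived from $H$ and scale $\tau_j$, so $\|\mathbf{w}_j\|_p^p$ reduces to a sum of $\mathcal{N}_j$ i.i.d.\ terms of size $\tau_j^p$ (for $p<\infty$), and $\|\mathbf{w}_j\|_\infty$ to $\tau_j$ times the maximum of $\mathcal{N}_j$ i.i.d.\ copies of $|\xi|$ with $\xi\sim H$.

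The heart of the argument is to pin down the almost-sure order of $\|\mathbf{w}_j\|_p$ in each regime. When $0\le\beta<1$ and $p<\infty$, $\E\mathcal{N}_j=2^{(1-\beta)j}C_2\to\infty$, so a binomial SLLN gives $\mathcal{N}_j/\E\mathcal{N}_j\to 1$ a.s., and the strong law for random sums (Lemma~\ref{lem:LLN} in the appendix), applied under Assumption~H case~1, yields $\|\mathbf{w}_j\|_p\sim\tau_j(2^j\pi_j)^{1/p}(\E|\xi|^p)^{1/p}$ almost surely. When $0\le\beta<1$ and $p=\infty$, I would apply classical extreme-value theory under the tail hypothesis of Assumption~H case~2: for polynomial tails in $D(e^{-x^{-\ell}})$, the maximum of $\mathcal{N}_j$ i.i.d.\ copies grows as $\mathcal{N}_j^{1/\ell}=2^{(1-\beta)j/\ell}$, which is precisely where the correction $\delta_H=(1-\beta)/\ell$ enters, while for sub-exponential tails the growth is only $(\log\mathcal{N}_j)^{1/m}$, a sub-polynomial factor absorbed into the strict inequality so that $\delta_H=0$. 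When $\beta=1$, $\mathcal{N}_j$ is asymptotically $\mathrm{Poisson}(C_2)$ and does not grow, so $\|\mathbf{w}_j\|_p$ is $\tau_j$ times a tight random variable; cases~3--4 of Assumption~H together with a Borel--Cantelli argument identify the correct critical exponent, the logarithmic-moment hypothesis of case~4 being exactly what ensures the $q=\infty$ sup stays a.s.\ finite.

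Assembling the level-wise asymptotics, the typical size of $2^{js'q}\|\mathbf{w}_j\|_p^q$ is $2^{jq[s+1/2-\beta/p-\alpha/2+\delta_H]}$ times a constant, so the series is a.s.\ finite exactly when $s<(\alpha-1)/2+\beta/p-\delta_H$; in the single regime where $\delta_H=0$ and $p<\infty$, the sup at $q=\infty$ remains a.s.\ bounded even at equality, giving the weak inequality in the first line of the theorem. Necessity in the opposite direction follows from the matching a.s.\ lower bounds delivered by the same SLLN and extreme-value analysis. The main technical obstacle I expect is the uniformity required in the random-sum SLLN: level-by-level a.s.\ estimates of $\|\mathbf{w}_j\|_p/\E[\|\mathbf{w}_j\|_p^p]^{1/p}$ are insufficient because the factor $2^{js'}$ grows geometrically, so Lemma~\ref{lem:LLN} must provide a rate summable after this magnification; a parallel difficulty arises in the $p=\infty$ polynomial-tail case, where the random-index maximum has to be pinned both from above and below with a matching polynomial rate in order to establish a strict lower bound at the critical exponent.
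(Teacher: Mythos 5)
Your overall route is the same as the paper's: the paper proves a general criterion (Theorem~\ref{th:RegOWT}) for arbitrary $\tau_j,\pi_j$ and then specialises to the parametrisation \eqref{eq:hyper0}, and its ingredients are exactly the ones you list --- the sequence-norm characterisation, Lemma~\ref{lem:LLN} for $p<\infty$ with $2^j\pi_j\to\infty$, extreme-value theory for $p=\infty$, and a Poisson/Borel--Cantelli argument when $2^j\pi_j$ is asymptotically constant. One remark on a non-issue you raise: the ``uniformity'' obstacle you anticipate for $p<\infty$ is not there. Lemma~\ref{lem:LLN} gives $(\pi_j2^j)^{-1}\|z_j\|_p^p\to\nu_p>0$ on a single event of probability one, so for all large $j$ the random term $2^{js'q}\|\mathbf{w}_j\|_p^q$ is sandwiched between constant multiples of the deterministic term $[2^{j(s'+1/p)}\tau_j\pi_j^{1/p}]^q$; equiconvergence of the random and deterministic series follows with no summable rate required.

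The one genuine gap is your treatment of $p=\infty$ with polynomial tails. You propose to pin $\max_k|z_{jk}|$ ``from above and below with a matching polynomial rate'' at the scale $b_j\asymp(2^j\pi_j)^{1/\ell}$. But in the Fr\'echet domain of attraction $\|z_j\|_\infty/b_j$ does \emph{not} converge almost surely to a constant; it converges only in distribution to a nondegenerate Fr\'echet law (in contrast to the Gumbel case, where the paper's Borel--Cantelli argument does give $\|z_j\|_\infty/b_j\to1$ a.s.). An almost-sure two-sided bound at scale $b_j$ is therefore unavailable, and the critical exponent must be extracted differently: the paper writes the Besov sum as $\sum_j 2^{js'q}\tau_j^qb_j^q\zeta_j^q$ with $\zeta_j=\|z_j\|_\infty/b_j$ independent and asymptotically Fr\'echet, and applies the three-series and monotone-convergence theorems --- this is precisely where the hypothesis $\ell>q$ of Assumption H~2(a) enters, since $\E\zeta_j^q<\infty$ requires $q<\ell$ --- and for $q=\infty$ it computes the limit law of $\max_{j_0\le j\le n}2^{js'}\tau_jb_j\zeta_j$ and checks when it is proper. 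Your sketch neither supplies this mechanism nor indicates where the restriction $q<\ell$ is used, so the polynomial-tail part of the criterion (the $\delta_H=(1-\beta)/\ell$ correction) is not actually established by your argument as written.
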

This theorem follows from Theorem~\ref{th:RegOWT}.

\begin{table}
\begin{center}
\begin{tabular}{|l| l| l| }
\hline
Parameters & $0\leqslant \beta<1$ & $\beta=1$ \\
\hline
$1\leqslant p <\infty$, $1\leqslant q < \infty$ & $s < (\alpha-1)/2+\beta/p $ & $s <(\alpha-1)/2+1/p$ \\
\hline
$1\leqslant p <\infty$, $q = \infty$ & $s \leqslant (\alpha-1)/2+\beta/p$ & $s <(\alpha-1)/2+1/p${$^*$}\\
\hline $p=\infty$, $1\leqslant q <l $ or $q=\infty$, & & \\
$1 - H(x) +H(-x) \sim c_l x^{-l}$ & $s <(\alpha-1)/2+ (\beta-1) /\ell $ &
$s <(\alpha-1)/2$ \\
\hline
$p=\infty$, $1\leqslant q \leqslant \infty$ & &\\
$1 - H(x) +H(-x) \sim c_m e^{-(\lambda x)^m}$ &$s <(\alpha-1)/2 $ & $s <(\alpha-1)/2 $ \\
\hline
\end{tabular}
\end{center}
\caption{\textsl{The necessary and sufficient condition stated in Theorem \ref{th:mainBesov} for different values of parameters  (condition $^*$ is sufficient).} }
\end{table}

\begin{table}
\begin{center}
\begin{tabular}{|l| l| }
\hline
Parameters & The necessary and sufficient condition \\
\hline
$0\leqslant \beta<1$ , $1\leqslant p <\nu$, $1\leqslant q < \nu$ & $s < (\alpha-1)/2 +\beta/p $ \\
\hline
$0\leqslant \beta<1$ , $1\leqslant p <\nu$, $q = \infty$ & $s  \leqslant (\alpha-1)/2+\beta/p $ \\
\hline
$0\leqslant \beta<1$ , $p=\infty$, $1\leqslant q < \nu$ & $s < (\alpha-1)/2+ (\beta-1)/\nu$ \\
\hline
$\beta=1$, $1\leqslant p \leqslant \infty$, $1\leqslant q <\nu$ & $s < (\alpha-1)/2+ 1/p$ \\
\hline
$\beta=1$, $1\leqslant p \leqslant \infty$, $q=\infty$ & $s < (\alpha-1)/2 + 1/p ${$^*$}\\
\hline
\end{tabular}
\end{center}
\caption{\textsl{The necessary and sufficient condition in case $h=t_{\nu}$ (condition $^*$ is sufficient). } }
\label{table:besovT}
\end{table}

For finite $p$, the regularity parameter $s$ which is related to smoothness of $f$, must be compensated by the hyperparameters
of the variance $\alpha$ and of the proportion of non-zero wavelet
coefficients $\beta$ scaled by $p$.
If $p$ is infinite, the proportion of non-zero wavelet coefficients related to the hyperparameter $\beta$ ceases
to matter in the case of a distribution with quickly decreasing tail such as
power exponential. However, for a distribution with a more slowly decreasing tail such as polynomial
it is still essential:
$$
s  < \frac{\alpha-1} 2 -\frac{1-\beta} l.
$$
Therefore, for $p=\infty$, a heavier-tailed prior corresponds to a less regular function, with high probability.

\subsection{Examples}

In order to illustrate the results of the theorem, we consider several examples of
distribution $H$.

\begin{enumerate}
\item {\bf Normal distribution,} $h(x)=\frac 1 {\sqrt{2\pi}\sigma}
e^{-x^2/2\sigma^2}$. In this case, assumption H is satisfied for
all combinations of the parameters. Therefore the necessary and
sufficient condition for a function with wavelet coefficients
obeying the model (\ref{eq:prior}) with $H(x)=\Phi(x/\sigma)$ to belong
to $B^s_{p,q}$ with probability 1 is
\begin{eqnarray*}
s  &\leq& \frac{\alpha-1} 2 +\frac{\beta} p
 \quad\quad \text{if } \quad 1\leqslant p<\infty, \quad q=\infty, \quad 0\leqslant \beta<1,\\
 s   &<& \frac{\alpha-1} 2 +\frac{\beta} p, \quad otherwise.
\end{eqnarray*}
This result coincides with the one stated in  \citet{p:abr-sap-silv-98}.

\item {\bf Laplacian (double-exponential) distribution,} $h(x) = \frac {\lambda} 2 e^{-\lambda|x|}$.
Since Laplacian
distribution has exponential tail and all its moments are finite,
the necessary and sufficient condition is the same as for the
normal distribution.

\item {\bf T distribution with $\nu$ degrees of freedom,} $h(x) =
C_{\nu}(1+x^2/\nu)^{-\frac{\nu+1}{2}}$, $\nu\geqslant 1$. T
distribution with parameter $\nu\geqslant 1$ has finite moments of order less than $\nu$ and belongs to $D(e^{-x^{-\ell}})$ with $\ell = \nu$. The necessary and
sufficient condition for a function to belong to the Besov spaces
in terms of its wavelet coefficients in each case is given in Table
\ref{table:besovT}. Due to a finite number of finite absolute
moments, the conditions are more restrictive than for
distributions with power-exponential tail.

\end{enumerate}

\section{General case, orthogonal wavelet transform}\label{seq:General}

\subsection{Regularity properties}

In this section we state the necessary and sufficient condition in terms of arbitrary $\tau_j^2$ and $\pi_j$
for a function to belong to Besov space $B_{pq}^s$ with probability 1.

Below, we understand that for $p=\infty$, $1/p$ stands for 0.

\begin{theorem}\label{th:RegOWT} Consider a function $f$ with wavelet coefficients following
distribution (\ref{eq:prior}). Suppose that parameters of the
Besov spaces are restricted to $1 \leqslant p,q \leqslant \infty$,
$0 < s < r$, where $r$ is the regularity of the wavelet and
scaling functions. Denote $s' = s-1/p+1/2$ and $\xi \sim H$.

\begin{enumerate}
\item\label{it1:owt} $p < \infty$: assume   $\E |\xi|^{p} < \infty$,  $\pi_j 2^j\rightarrow\infty$ as $j\rightarrow\infty$ and $\pi_j 2^j$ is a monotonically increasing sequence  for large enough $j$.

Then, for any fixed value of $||u_{j_0}||_p < \infty$,
\begin{eqnarray*}
\PP\{ f \in B_{p q}^s \} =1 \quad \Leftrightarrow \quad
\sum_{j=j_0}^{\infty} \left[2^{j (s' +1/p)} \tau_j
\pi_j^{1/p}\right]^q &<& \infty, \quad  1 \leqslant q<\infty;\\
\sup_{j \geqslant j_0} [2^{j (s' +1/p)} \tau_j \pi_j^{1/p}] &<&
\infty, \quad q=\infty.
\end{eqnarray*}

\item\label{it2:owt} $p = \infty$. Assume that  $\pi_j 2^j > 1$
 is a sequence monotonically increasing to $\infty$ for large $j$.
Denote $H_{+}(x) = H(x)-H(-x)$, $x\geqslant 0$, and assume that
either $H_{+}(x)  \in D(e^{-e^{-x}})$ or $H_+(x) \in D(e^{-x^l})$,
for some $l>0$. Define $b_j$: $H_+(b_j) = 1 - (\pi_j 2^j)^{-1}$.

Assume that
\begin{enumerate}
\item   $ [ 2^{j} \pi_j b_j H_+'(b_j)]^{-1} \log j \to 0$ as $j
\to \infty$ if $H_{+}(x) \in D(e^{-e^{-x}})$;
\item  $q<\ell$ if $H_+(x) \in D(e^{-x^{-\ell}})$.
\end{enumerate}

Then, for any fixed value of $||u_{j_0}||_p < \infty$,
\begin{eqnarray*}
\PP\{ f \in B_{pq}^s \} =1 \quad \Leftrightarrow \quad
\sum_{j=j_0}^{\infty} [2^{j s' } \tau_j b_j]^{q} &<& \infty, \quad
1 \leqslant q< Q;\\  \quad \sup_{j \geqslant j_0} [2^{j s'} \tau_j
b_j] &<& \infty, \quad q=\infty,
\end{eqnarray*}
where  $Q = \infty$ if $H_{+}(x)  \in D(e^{-e^{-x}})$, and $Q = \ell$
if $H_+(x) \in D(e^{-x^{-\ell}})$.

\item\label{it3:owt} $1 \leqslant p \leqslant \infty$, $q <
\infty$, $\pi_j 2^j \to const$ as $j\to \infty$. Assume that $\E |\xi|^q < \infty$.  Then, for any fixed value of
$||u_{j_0}||_p < \infty$,
\begin{eqnarray*}
\PP\{ f \in B_{pq}^s \} =1 \quad \Leftrightarrow \quad
\sum_{j=j_0}^{\infty} [2^{j s'} \tau_j]^q < \infty.
\end{eqnarray*}

\item\label{it4:owt} $1 \leqslant p \leqslant \infty$, $q =
\infty$, $\pi_j 2^j \to const$ as $j\to \infty$.  Assume that $M(j) = \tau_j^{-1}
2^{-j s'}$ is a monotonically increasing sequence for $j\geq \tilde{j}$ for some $\tilde{j}\geq j_0$, and that $M(x)$ is extended to $[\tilde{j}, \infty)$ in such a way that function $M(x)$ (and hence $M^{-1}(x)$ is monotonically increasing.

Then, for any fixed
value of $||u_{j_0}||_p < \infty$,
$$\PP\{ f \in B_{pq}^s \} =1 \,\, \Leftrightarrow \,\, \exists \ c > 0: \,\, \E
M^{-1}(|\xi|/c) < \infty.$$

\item\label{it5:owt} $\sum_{j=j_0}^{\infty} \pi_j 2^j < \infty$. Assume that $\PP(|\xi|<\infty)=1$. 
Then, for any fixed value of $||u_{j_0}||_p < \infty$, $\PP\{ f \in
B_{pq}^s \} =1$ for  $1\leq p, q\leq \infty$ and $ s\in(0,r)$.

\end{enumerate}

\end{theorem}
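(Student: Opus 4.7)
The wavelet characterisation of Besov spaces reduces $f \in B^s_{p,q}$ almost surely (for fixed $\|\mathbf{u}_{j_0}\|_p$) to almost sure summability of $\sum_j 2^{js'q}\|\mathbf{w}_j\|_p^q$, or boundedness of $2^{js'}\|\mathbf{w}_j\|_p$ when $q=\infty$. The level-$j$ vector has $\mathcal{N}_j\sim\mathrm{Bin}(2^j,\pi_j)$ nonzero entries, each at scale $\tau_j$ with distribution governed by $H$, so writing $\xi_i$ for i.i.d.\ copies of $\xi \sim H$, we have $\|\mathbf{w}_j\|_p^p = \tau_j^{p}\sum_{i=1}^{\mathcal{N}_j}|\xi_i|^p$ for $p<\infty$ and $\|\mathbf{w}_j\|_\infty = \tau_j\max_{i\le \mathcal{N}_j}|\xi_i|$. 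The plan is to pin down the almost sure asymptotics of these quantities in each of the five regimes.

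For Cases~(1)--(2), where $2^j\pi_j\to\infty$ monotonically, I would first use a Bernstein bound together with Borel--Cantelli to obtain $\mathcal{N}_j/(2^j\pi_j)\to 1$ almost surely. In Case~(1), combining this with Lemma~\ref{lem:LLN} applied to the i.i.d.\ sequence $|\xi_i|^p$ (which has finite mean by hypothesis) yields
\[
\|\mathbf{w}_j\|_p \;\sim\; \tau_j\,(2^j\pi_j)^{1/p}(\E|\xi|^p)^{1/p} \quad \text{almost surely,}
\]
so both directions of the equivalence reduce to summability of a deterministic companion sequence. Case~(2) requires extreme value theory: since $b_j$ is by definition the $(1-(2^j\pi_j)^{-1})$-quantile of $H_+$, in the Gumbel domain I would establish $\max_{i\le \mathcal{N}_j}|\xi_i|/b_j\to 1$ almost surely, with the hypothesis $(2^j\pi_j b_j H'_+(b_j))^{-1}\log j\to 0$ supplying precisely the Borel--Cantelli bookkeeping that upgrades distributional convergence to almost sure convergence. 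In the Fréchet domain $D(e^{-x^{-\ell}})$ the normalised maximum converges only in distribution, but the hypothesis $q<\ell$ gives enough moment control so that almost sure summability of $\sum[2^{js'}\tau_j \max|\xi_i|]^q$ is equivalent to that of $\sum[2^{js'}\tau_j b_j]^q$ via Kolmogorov's three-series theorem.

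In Cases~(3)--(4), where $\mathcal{N}_j$ remains bounded in distribution (asymptotically Poisson with parameter $c$), one cannot replace $\|\mathbf{w}_j\|_p$ by a deterministic sequence. For the series case~(3), independence of the $\mathbf{w}_j$ across $j$ lets me apply Kolmogorov's three-series theorem to $X_j=2^{js'q}\|\mathbf{w}_j\|_p^q$; the assumption $\E|\xi|^q<\infty$ gives $\E X_j\asymp (2^{js'}\tau_j)^q$, while necessity uses a matching lower bound from the event $\{\mathcal{N}_j=1\}$, which has probability bounded away from zero. For the supremum case~(4) with $M(j)=\tau_j^{-1}2^{-js'}$, $\sup_j 2^{js'}\|\mathbf{w}_j\|_p$ has the same tail behaviour as $\sup_j 2^{js'}\tau_j|\xi_{(j)}|$ with independent $\xi_{(j)}\sim H$, so Borel--Cantelli gives almost sure finiteness iff $\sum_j\PP(|\xi|/c>M(j))<\infty$ for some $c>0$; rewriting this as $\sum_j\PP(M^{-1}(|\xi|/c)>j)<\infty$ and invoking the identity $\sum_j\PP(Z>j)<\infty \iff \E Z<\infty$ produces the stated integrability criterion.

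Case~(5) is immediate: when $\sum 2^j\pi_j<\infty$, the first Borel--Cantelli lemma forces only finitely many nonzero coefficients across all levels, so $f$ is almost surely a finite wavelet sum and belongs to every $B^s_{p,q}$ with $s<r$. The main obstacle I expect is Case~(2): upgrading extreme-value limits of the random-length maximum to almost sure rates is delicate, and the Gumbel hypothesis $(2^j\pi_j b_j H'_+(b_j))^{-1}\log j\to 0$ must be used in exactly the right way, while the Fréchet subcase is sharp at $q=\ell$ and demands careful truncation. The necessity direction in Case~(4) also calls for a genuine reverse Borel--Cantelli argument exploiting independence across $j$ rather than a mere moment estimate.
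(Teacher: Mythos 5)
Your proposal is correct and follows essentially the same route as the paper: reduction via the sequence-norm equivalence, the random-length strong law (Lemma~\ref{lem:LLN}) for case~1, extreme-value asymptotics of $\max_k|z_{jk}|/b_j$ (almost sure in the Gumbel domain, distributional plus the three-series theorem with $q<\ell$ in the Fr\'echet domain) for case~2, the three-series/monotone-convergence argument with the Poisson limit of $\mathcal{N}_j$ for case~3, two-sided Borel--Cantelli for case~4, and finiteness of $\mathcal{N}$ for case~5. The only cosmetic difference is that you propose a separate Bernstein concentration step for $\mathcal{N}_j/(2^j\pi_j)$, which Lemma~\ref{lem:LLN} already absorbs by handling the binomial number of summands directly.
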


Note that in case \ref{it4:owt}, the assumption and the necessary
and sufficient conditions are reversed, i.e. we make an assumption
on parameter $\tau_j$ and the necessary and sufficient condition
is given in terms of a finite moment of a function of $\xi$. For
example, if $\tau_j^2 = c_{\tau} 2^{-\alpha j}$, as considered in
the previous section, $M(j) = c_{\tau}^{-1} 2^{(\alpha/2 - s')
j}$, so the assumption is that $\alpha > 2 s'$ and the necessary
and sufficient condition is that $\exists c > 0$:  $\E \log_2
(|\xi| c_{\tau}/c) <\infty$. In case $\tau_j = c_{\tau} j^{\gamma}
2^{- 2 j s'}$, the assumption is that $\gamma < 0$ and that $\E
|\xi|^{-2/\gamma} < \infty$, i.e. the necessary and sufficient
condition becomes stronger for a slower decrease of $\tau_j$ for
large $j$.

For finite $p$ and $q$, sufficient conditions for $f\in B_{pq}^s$ almost surely
in terms of $\nu_j=1/\tau_j$ and odds $\beta_j =
\frac{1-\pi_j}{\pi_j}$ was given by  \citet{p:pensky-2003}.
Their conditions are the same or stronger than the assumptions and the necessary and sufficient conditions stated in Theorem~\ref{th:RegOWT}.
 If $p<q$ (part of cases \ref{it1:owt} and \ref{it2:owt}), their sufficient condition is stronger since we have the series with power $q$ of $2^{j(s +1/2)}\tau_j \pi_j^{1/p}$ instead of $\min(p,q)$ as  in \citet{p:pensky-2003}. In case \ref{it1:owt} and  $p>q$, our assumption $\E |\xi|^p<\infty$ is weaker than $\E |\xi|^{\max(p,q)}<\infty$ of \citet{p:pensky-2003}, and in case  \ref{it3:owt} it is weaker if $q<p$.  In case of a constant expected number of all nonzero wavelet coefficients, we do not need to assume finiteness of any absolute moment of distribution $H$, as long as it is not degenerate, i.e. if $\PP(|\xi|<\infty)=1$.

\begin{remark}
Only in the case $p =\infty$, $\pi_j 2^j >1$,
parameters of the tail behaviour of the distribution $H$ are
explicitly related to the condition on the parameters of the Besov
spaces.
\end{remark}

Next we discuss the asymptotic behaviour of $b_j$ defined  in the theorem that is used to derive Theorem~\ref{th:mainBesov}.

\begin{remark}
Consider $H(x)$ such that $1-H_+(x) = e^{-|x|^{m}} [1+o(1)]$ as $x\to \infty$, for some $m>0$ and $c_m>0$. This distribution belongs to the domain of attraction of $e^{-e^{-x}}$, with $b_j =  [\log (2^j\pi_j)]^{1/m}[1+o(1)]$.  Then, one of the assumptions on $H$ in case $p=\infty$ and $2^j\pi_j\to \infty$ is
$$
\frac{\log j}{ 2^{j} \pi_j b_j H_+'(b_j)}  = \frac{\log j}{ m \log (2^j\pi_j)[1+o(1)]} \to 0\,\,\text{as}\,\,  j\to \infty
$$
which is satisfied if $2^j\pi_j$ increases to infinity faster than any power of $j$.

If $1-H_+(x) \leq c_m e^{-|x|^{m}} $ for large enough $x>0$, for some $m>0$ and $c_m>0$, it belongs to the same domain of attraction, and
$$
(2^j\pi_j)^{-1} = 1-H_+(b_j) \leq c_m e^{-b_j^{m}} \,\, \Leftrightarrow \,\,   b_j \leq [\log(c_m 2^j\pi_j)]^{1/m}.
$$
\end{remark}

\begin{remark}
Consider $H(x)$ such that $1-H_+(x) =  |x|^{-\ell}[1+o(1)]$ as $x\to \infty$ for some $\ell>0$. It is easy to verify that it belongs to the domain of attraction of $e^{-x^{-\ell}}$, with $b_j = (2^j\pi_j)^{1/\ell}[1+o(1)]$ as $j\to \infty$ (case $p=\infty$ and $2^j\pi_j\to \infty$ as $j\to \infty$ of Theorem~\ref{th:RegOWT}).

Moreover, if $H_+(x)\in D(e^{-x^{-\ell}})$ for some $\ell>0$, using Definition~\ref{def:EVT},  as $j\to \infty$, for any $x>0$ we have
$$
1-H_+(xb_j) = x^{-\ell} [1-H_+(b_j)](1+o(1)).
$$
Taking some finite $x_0$ where $x_0>0$ is independent of $j$ and is such that  $1-H_+(x_0)\in(0,1)$, e.g. $1-H_+(x_0)=1/2$, we have with $x=x_0/b_j$:
$$
1-H_+(x_0) = x_0^{-\ell} b_j^{\ell}/(2^j\pi_j)(1+o(1)),
$$
i.e. $b_j = c_0(2^j\pi_j)^{1/\ell}(1+o(1))$ with a finite positive constant $c_0 = x_0^{\ell}/(1-H_+(x_0))$.
\end{remark}

\subsection{Parametrisation with three hyperparameters}

For the parametrisation considered in Section \ref{sec:regularDWT},
Theorem \ref{th:mainBesov} implies that functions with normal and
exponential tails of the distribution of their wavelet
coefficients fall in exactly the same Besov spaces almost surely. Also,
parameter $q$ of the Besov spaces is not strongly related to the
hyperparameters (\ref{eq:hyper0}) of probabilistic model. Thus, we may wish to reparametrise $\tau_j$ and $\pi_j$ by introducing a
third hyperparameter $\gamma$ which
is related to parameter $q$ and 
which separates the cases of normal and exponential distributions.
Following \citet{p:abr-sap-silv-98}, we keep parameter $\pi_j$ the same and introduce additional factor
to the parameter $\tau_j$:
\begin{equation}\label{eq:hyper1}
\tau_j^2 = j^{\gamma} 2^{-\alpha j}C_1, \quad j>0, \quad \tau_0^2
= C_1,
\end{equation}
where $\gamma$ takes values in $\mathbb{R}$. Define
\begin{displaymath}
\delta = \left\{ \begin{array}{ll}
s + \frac 1 2 -\alpha/2 -\beta/p, & \quad p<\infty, \\
s + \frac 1 2 -\alpha/2, & \quad p=\infty.
\end{array} \right.
\end{displaymath}
 Applying Theorem \ref{th:RegOWT}, we obtain the following statement.

\begin{corollary} Let $0\leqslant\beta<1$, $p=\infty$, $1\leqslant q \leqslant \infty$, and suppose that $\psi$ and $\phi$ are wavelet and scaling functions of regularity $r$, where $0 < s < r$. Consider the Bayesian model (\ref{eq:prior}) under parameterisation \eqref{eq:hyper1} for wavelet
coefficients $w_{jk}$  of the function $f$.

\begin{enumerate}
\item {\bf $H$ is normal distribution.} For any fixed value of
$u_{00}$, $f\in B_{\infty,q}^s $ if and only if
\begin{eqnarray*}
& 1\leqslant q<\infty: & \quad \text{either} \quad (\delta<0), \quad
\text{or} \quad (\delta=0\quad
\text{and} \quad \gamma< -2/q-1);\\
& q=\infty: & \quad \text{either} \quad (\delta<0), \quad \text{or}
\quad (\delta=0\quad \text{and} \quad \gamma\leqslant -1).
\end{eqnarray*}
This coincides with Theorem 2 stated in \citet{p:abr-sap-silv-98}.

\item {\bf $H$ is Laplacian distribution.} For any fixed value of
$u_{00}$, $f\in B_{\infty,q}^s$ if and only if
\begin{eqnarray*}
& 1\leqslant q<\infty: &\quad either \quad (\delta<0), \quad or
\quad
(\delta=0\quad and \quad \gamma < - 2/q -2); \\
& q=\infty: &\quad either \quad (\delta<0), \quad or \quad (\delta=0
\quad and \quad \gamma \leqslant -2).
\end{eqnarray*}
\end{enumerate}
\end{corollary}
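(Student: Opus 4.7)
The plan is to specialise Theorem~\ref{th:RegOWT} to the setting at hand. Since $p=\infty$ and $0\leq\beta<1$, we have $2^j\pi_j = C_2\,2^{(1-\beta)j}\to\infty$ monotonically for $j$ large, so we are in case~\ref{it2:owt} of the theorem. Both distributions in question are symmetric with unbounded support and satisfy $1-H_+(x)\sim c\,x^{a}e^{-b x^m}$ for suitable constants, so both lie in the domain of attraction $D(e^{-e^{-x}})$. The task therefore reduces to (a) locating $b_j$ with $H_+(b_j)=1-(2^j\pi_j)^{-1}$, (b) checking the technical condition $[2^j\pi_j b_j H_+'(b_j)]^{-1}\log j\to 0$, and (c) reading off convergence of $\sum_j [2^{js'}\tau_j b_j]^q$ or boundedness of $\sup_j 2^{js'}\tau_j b_j$ under the parametrisation~\eqref{eq:hyper1}.

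For the normal case, Mills' ratio gives $1-\Phi(x)\sim \phi(x)/x$, so solving $2(1-\Phi(b_j/\sigma))=1/(2^j\pi_j)$ yields
\begin{equation*}
b_j = \sigma\sqrt{2(1-\beta)(\log 2)\, j}\,[1+o(1)].
\end{equation*}
A short computation then gives $2^j\pi_j b_j H_+'(b_j)\sim b_j^2/\sigma^2\sim 2(1-\beta)(\log 2)\,j$, so the required assumption $[2^j\pi_j b_j H_+'(b_j)]^{-1}\log j\to 0$ holds. For the Laplace case, $1-H_+(x)=e^{-\lambda x}$ gives explicitly $b_j=\lambda^{-1}\log(2^j\pi_j)=\lambda^{-1}(1-\beta)(\log 2)\,j\,[1+o(1)]$, and $H_+'(b_j)=\lambda e^{-\lambda b_j}=\lambda/(2^j\pi_j)$, whence $2^j\pi_j b_j H_+'(b_j)=\lambda b_j\sim(1-\beta)(\log 2)\, j$ and the assumption holds as well.

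Now, since $p=\infty$ gives $s'=s+1/2$, under~\eqref{eq:hyper1} we have
\begin{equation*}
2^{js'}\tau_j = \sqrt{C_1}\,j^{\gamma/2}\,2^{j(s+1/2-\alpha/2)} = \sqrt{C_1}\,j^{\gamma/2}\,2^{j\delta}.
\end{equation*}
Multiplying by $b_j$ contributes an extra polynomial factor: $j^{1/2}$ in the normal case and $j$ in the Laplace case. Thus $2^{js'}\tau_j b_j$ is a constant multiple of $j^{(\gamma+1)/2}\,2^{j\delta}$ for the normal and $j^{(\gamma+2)/2}\,2^{j\delta}$ for the Laplace. The series $\sum_j j^{A q}\,2^{qj\delta}$ converges iff either $\delta<0$, or $\delta=0$ and $Aq<-1$; the supremum $\sup_j j^A\,2^{j\delta}$ is finite iff either $\delta<0$, or $\delta=0$ and $A\leq 0$. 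Applying this with $A=(\gamma+1)/2$ (normal) gives $\gamma<-2/q-1$ for $q<\infty$ and $\gamma\leq-1$ for $q=\infty$; with $A=(\gamma+2)/2$ (Laplace) it gives $\gamma<-2/q-2$ for $q<\infty$ and $\gamma\leq-2$ for $q=\infty$, matching the statement.

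The only nontrivial step is the asymptotic analysis of $b_j$ and of the product $2^j\pi_j b_j H_+'(b_j)$: one needs to be careful enough with the Mills ratio to get the constant $2(1-\beta)\log 2$ inside the square root in the normal case, since this constant is what determines that the polynomial factor contributed by $b_j$ is exactly $j^{1/2}$, and hence what sets the critical value of $\gamma$ at $\delta=0$. Everything else is a direct substitution into the convergence criterion of Theorem~\ref{th:RegOWT}, case~\ref{it2:owt}.
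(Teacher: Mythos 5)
Your proposal is correct and takes exactly the route the paper intends: the paper derives this corollary by direct application of Theorem~\ref{th:RegOWT}, case~\ref{it2:owt}, and your computation of $b_j\asymp j^{1/2}$ (normal) and $b_j\asymp j$ (Laplace), the verification of the condition $[2^j\pi_j b_j H_+'(b_j)]^{-1}\log j\to 0$, and the reduction to the convergence of $\sum_j j^{Aq}2^{qj\delta}$ all match the asymptotics recorded in the paper's remarks following that theorem. No gaps.
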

We can see that for the same values of the hyperparameters
$\alpha$, $\beta$, $\gamma$ functions with the normal model of
wavelet coefficients belong to a wider class of Besov spaces
compared to the functions with Laplacian model. Therefore, if we
want to span a larger set of functions (for fixed values of the
hyperparameters) we need to choose a lighter tail.

\subsection{Distribution without point mass at zero}

Our results apply to the  probability model \eqref{eq:prior} with $\pi_j=1$, i.e. to the case there is no point mass term. Theorem~\ref{th:RegOWT} implies that given distribution $H$,  the necessary and sufficient condition for $f\in B_{pq}^s$ is as follows.
\begin{enumerate}
\item $1\leq p<\infty$, $q<\infty$, $\E|\xi|^p <\infty$: \,\, $\sum_{j=j_0}^{\infty}  2^{j q(s  +1/2)} \tau_j^q < \infty$.
\item $1\leq p<\infty$, $q=\infty$: \quad\,\, $\sup_{j \geqslant j_0}  2^{j (s  +1/2)} \tau_j  < \infty$.
\item $p=\infty$, $1 \leqslant q< Q$: \quad\,\,   $\sum_{j=j_0}^{\infty} 2^{j q(s  +1/2) } \tau_j^q b_j^{q} < \infty$.
\item $p=\infty$, $q=\infty$: \quad\,\, $ \sup_{j \geqslant j_0} [2^{j (s  +1/2)} \tau_j b_j] < \infty$.
\end{enumerate}
That is, it is always necessary to have $\tau_j \ll 2^{-j(s  +1/2)}$, for instance, $\tau_j = j^{-2} 2^{-j(s  +1/2)}$ or $\tau_j =  2^{-j(s  +1/2 + \epsilon)}$ for any positive $\epsilon$. These choices correspond to the case of a non-adaptive prior in wavelet regression, i.e. where the prior distribution depends on the smoothness of the unknown function. An example of the adaptive prior is with $\tau_j = 2^{-j/2}$ with a double exponential or a Gaussian distribution $H$, which implies that the corresponding random function belongs to $B_{p,q}^s$ for all $1\leq p, q\leq\infty$ and $s>0$ with probability 1.



\subsection{A ``trivial'' parametrisation}

 Function $f$ belongs to Besov spaces $B_{pq}^s$ with $s\in (0,r)$ spaces with probability 1 (all Besov spaces that it is possible to recover using wavelets of regularity $r$), if the corresponding wavelet series has a finite number of non-zero wavelet coefficients with probability 1,  for instance, if  the distribution of the wavelet coefficients \eqref{eq:prior} has $\pi_j = 2^{-2j}$ or $\pi_j = j^{-2} 2^{-j}$ and arbitrary $\tau_j$ (case 5 in Theorem~\ref{th:RegOWT}).

\section{Application to non-parametric regression}\label{sec:regression}

Consider a nonparametric regression problem \eqref{eq:regression} with independent Gaussian observation errors $\epsilon_i \sim N(0,\sigma^2)$ where the aim is to estimate the unknown function $f$. A function $f\in L^2[0,1]$ can be represented in a wavelet basis \eqref{eq:WaveletFn}, as discussed in Section~\ref{sec:wavelet}.

The corresponding observed wavelet coefficients satisfy $y_{jk} \mid \theta_{jk}\sim N(\theta_{jk}, \sigma^2 )$ independently, $j=0,\ldots, J-1$, $k=0,\ldots, 2^j-1$ where $\theta_{jk}=w_{jk}\sqrt{n}$ and $J= \lfloor \log_2 n\rfloor$. In the Bayesian approach, the prior distribution \eqref{eq:prior} is put on $\theta_{jk}$ for $j<J$ which corresponds to the prior on $w_{jk}$ with the same $\pi_j$ and scale $\tau_j /\sqrt{n}$ for $j<J$, and $\pi_j = 0$ for $j\geq J$.
In this case, the number of non-zero wavelet coefficients is at most $n$, and hence is bounded for any fixed $n$. An interesting question is a priori membership of Besov spaces by the function with the wavelet coefficients that follow the chosen prior distribution for large sample size, i.e. when $n\to \infty$.
Then, Theorem~\ref{th:RegOWT} can be reformulated as follows.

\begin{theorem}\label{th:RegOWT2} Consider a function $f$ with wavelet coefficients determined by \eqref{eq:WaveletFn} following
distribution (\ref{eq:prior}). Suppose that parameters of the
Besov spaces are restricted to $1 \leqslant p,q \leqslant \infty$,
$0 < s < r$, where $r$ is the regularity of the wavelet and
scaling functions. Denote $s' = s-1/p+1/2$ and $\xi \sim H$.

\begin{enumerate}
\item\label{it1:owt2} $p < \infty$: assume   $\E |\xi|^{p} < \infty$ and  $\pi_j 2^j\rightarrow\infty$ as $j\rightarrow\infty$.

Then, for any fixed finite  $||u_{j_0}||_p$, $\PP\{ f \in B_{p q}^s \} =1$ if and only if
\begin{eqnarray*}
\lim_{n\to \infty} n^{-q/2} \sum_{j=j_0}^{\lfloor \log_2 n\rfloor-1} \left[2^{j (s' +1/p)} \tau_j
\pi_j^{1/p}\right]^q &<& \infty, \quad  1 \leqslant q<\infty;\\
\lim_{n\to \infty} n^{-1/2}\sup_{ j_0 \leq j < \lfloor \log_2 n\rfloor} [2^{j (s' +1/p)} \tau_j \pi_j^{1/p}] &<&
\infty, \quad q=\infty.
\end{eqnarray*}

\item\label{it2:owt2} $p = \infty$. Assume that  $\pi_j 2^j > 1$
 is a sequence monotonically increasing to $\infty$ for large $j$. 
Denote $H_{+}(x) = H(x)-H(-x)$, $x\geqslant 0$, and assume that
either $H_{+}(x)  \in D(e^{-e^{-x}})$ or $H_+(x) \in D(e^{-x^{-\ell}})$,
for some $\ell>0$.

Assume that
\begin{enumerate}
\item   $ [ 2^{j} \pi_j b_j H_+'(b_j)]^{-1} \log j \to 0$ as $j
\to \infty$ if $H_{+}(x) \in D(e^{-e^{-x}})$;
\item  $q<\ell$ \,\,  if $H_+(x) \in D(e^{-x^{-\ell}})$.
\end{enumerate}

Then, for any fixed finite $||u_{j_0}||_p$, $\PP\{ f \in B_{p q}^s \} =1$ if and only if
\begin{eqnarray*}
\lim_{n\to \infty} n^{-q/2} \sum_{j=j_0}^{\lfloor \log_2 n\rfloor-1} [2^{j s' } \tau_j b_j]^{q} &<& \infty, \quad
1 \leqslant q< Q;\\
\quad \lim_{n\to \infty} n^{-1/2}\sup_{j_0 \leq j < \lfloor \log_2 n\rfloor} [2^{j s'} \tau_j
b_j] &<& \infty, \quad q=\infty,
\end{eqnarray*}
where  $Q = \infty$ if $H_{+}(x)  \in D(e^{-e^{-x}})$, and $Q = \ell$
if $H_+(x) \in D(e^{-x^\ell})$.

\item\label{it3:owt2} $1 \leqslant p \leqslant \infty$, $1\leq q <
\infty$. Assume that  $\pi_j 2^j \to const$ as $j\to \infty$ and $\E |\xi|^q < \infty$.
Then, for any fixed finite  $||u_{j_0}||_p$, 
\begin{eqnarray*}
\PP\{ f \in B_{pq}^s \} =1 \quad \Leftrightarrow \quad
\lim_{n\to \infty} n^{-q/2}\sum_{j=j_0}^{\lfloor \log_2 n\rfloor-1} [2^{j s'} \tau_j]^q < \infty.
\end{eqnarray*}

\item\label{it4:owt2W} $1 \leqslant p \leqslant \infty$, $q =
\infty$, $\pi_j 2^j \to const$ as $j\to \infty$.  Assume that $M(j) = \tau_j^{-1}
2^{-j s'}$ is a monotonically increasing function for  $j\geqslant
\tilde{j}$ for some $\tilde{j} \geqslant j_0$, and that $M(x)$ is extended to $[\tilde{j}, \infty)$ in such a way that function $M(x)$ (and hence $M^{-1}(x)$ is monotonically increasing.

Then, for any fixed finite $||u_{j_0}||_p$, 
$$\PP\{ f \in B_{pq}^s \} =1 \,\, \Leftrightarrow \,\, \exists c > 0 :\,\, \E
M^{-1}(|\xi|/c) < \infty.$$

\item\label{it5:owt2}  $\sum_{j=j_0}^{\infty} \pi_j 2^j < \infty$. Assume that $\PP(|\xi|<\infty)=1$. 
 Then, for any fixed finite $||u_{j_0}||_p$, $\PP\{ f \in
B_{pq}^s \} =1$ for $p,q \geq 1$ and $s\in (0,r)$.

\end{enumerate}

\end{theorem}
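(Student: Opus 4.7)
The plan is to adapt the proof of Theorem~\ref{th:RegOWT} to the $n$-dependent, truncated prior appearing in the regression context. For each fixed $n$ the random function $f_n$ has at most $n$ nonzero wavelet coefficients and hence a.s.\ lies in every $B_{pq}^s$ with $s\in(0,r)$; the statement therefore concerns a.s.\ boundedness of $\|f_n\|_{B_{pq}^s}$ as $n\to\infty$. Writing $\tilde{w}_{jk}=\sqrt{n}\,w_{jk}$ for the unscaled coefficients, which for $j_0\le j<J:=\lfloor\log_2 n\rfloor$ follow the original prior \eqref{eq:prior} with parameters $(\tau_j,\pi_j)$, the sequence norm \eqref{eq:normequ} factors as
\begin{equation*}
\|f_n\|_{B_{pq}^s} \,=\, \|\mathbf{u}_{j_0}\|_p + n^{-1/2}\left\{\sum_{j=j_0}^{J-1} 2^{js'q}\|\tilde{\mathbf{w}}_j\|_p^q\right\}^{1/q},
\end{equation*}
with the usual $\sup$ when $q=\infty$. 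All five cases then reduce to controlling this normalised partial sum as $n\to\infty$.

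First I would treat case~\ref{it1:owt2}. Since $\pi_j 2^j\to\infty$, Lemma~\ref{lem:LLN} applied level by level gives $\|\tilde{\mathbf{w}}_j\|_p^p \sim 2^j\pi_j\tau_j^p\,\E|\xi|^p$ a.s.\ as $j\to\infty$, so the level-$j$ contribution is a.s.\ equivalent to $(\E|\xi|^p)^{q/p}\bigl[2^{j(s'+1/p)}\tau_j\pi_j^{1/p}\bigr]^q$; summing over $j_0\le j<J$ and dividing by $n^{q/2}$ yields the stated criterion. For necessity I would use a Borel--Cantelli/zero--one-law argument to show that if the deterministic weighted sum is unbounded in $n$, then so is the random analogue. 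The $q=\infty$ case uses the pointwise-in-$j$ estimate in place of the sum.

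Case~\ref{it2:owt2} ($p=\infty$) relies on the extreme-value analysis from Theorem~\ref{th:RegOWT}: under the domain-of-attraction hypothesis, $\|\tilde{\mathbf{w}}_j\|_\infty \sim b_j$ a.s.\ for large $j$, and the extra hypothesis $[2^j\pi_j b_j H'_+(b_j)]^{-1}\log j\to 0$ (Gumbel case) or $q<\ell$ (Fr\'echet case) is exactly what is needed in a Borel--Cantelli argument to control fluctuations of these maxima across levels. The normalised partial sum then behaves like $n^{-q/2}\sum_{j=j_0}^{J-1}[2^{js'}\tau_j b_j]^q$, producing the criterion. Cases~\ref{it3:owt2} and \ref{it4:owt2W} correspond to $\pi_j 2^j\to \mathrm{const}$, so the total number of nonzero coefficients of $f_n$ grows at most logarithmically in $n$ and the moment/inverse-function analysis from Theorem~\ref{th:RegOWT} transfers term by term once the $n^{-q/2}$ factor is incorporated. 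Case~\ref{it5:owt2} is immediate: when $\sum_j 2^j\pi_j<\infty$ and $|\xi|<\infty$ a.s., the total number of nonzero coefficients is a.s.\ finite uniformly in $n$, so $f_n$ is a finite linear combination of wavelets and lies in every $B_{pq}^s$ with $s\in(0,r)$.

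The principal technical obstacle is that the a.s.\ equivalences furnished by Lemma~\ref{lem:LLN} and the extreme-value theory hold only eventually in $j$, whereas for each $n$ the relevant partial sum stops at $J-1=\lfloor\log_2 n\rfloor-1$. One must verify that these asymptotic equivalences, interpreted as convergence of the $n^{-q/2}$-normalised partial sum to the deterministic series in the statement, hold uniformly in $n$ without spurious sample-size-dependent corrections; in particular, that the high-level terms (those close to $J$, which are the ``new'' levels appearing as $n$ grows) dominate and their growth rate matches the prescribed normalisation. Once this uniformity is established, the remaining argument is parallel to that of Theorem~\ref{th:RegOWT}.
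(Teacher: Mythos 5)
Your proposal follows essentially the same route as the paper: the paper's own proof of Theorem~\ref{th:RegOWT2} consists precisely of the reduction you describe --- replace the infinite sums in the proof of Theorem~\ref{th:RegOWT} by partial sums up to $\lfloor\log_2 n\rfloor-1$, substitute the scale $\tau_j/\sqrt{n}$, and take the limit as $n\to\infty$ --- with the level-by-level law-of-large-numbers and extreme-value estimates carried over unchanged. Your additional remark about verifying that the almost-sure equivalences (which hold only eventually in $j$) interact correctly with the $n$-dependent truncation point is a genuine point of care that the paper's one-sentence proof leaves implicit, but it does not change the argument.
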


The proof is obtained by replacing the infinite sums  in the proof of Theorem~\ref{th:RegOWT} by the sums up to $\lfloor \log_2 n\rfloor-1$ and using  the scale $\tau_j /\sqrt{n}$ instead of $\tau_j$, and taking the limit as $n\to \infty$.

The last condition implies that if $\pi_j$ is small enough so that $  \sum_{j=j_0}^{\infty} \pi_j 2^j < \infty$, then $f \in
B_{pq}^s$ almost surely for all values of parameters $p,q \geq 1$ and $s\in (0,r)$. Now we give examples of non-trivial probability models that correspond to the first four cases.

\citet{p:joh-sil-05} proved global minimax optimality of Bayesian thresholding estimators with prior distributions $H$ having tails not heavier than Cauchy and not lighter than exponential, with $\tau_j \equiv \tau$ for all $j$. We start with considering these  prior distributions.
\begin{example} Laplacian prior: $h(x) = \frac 1 2 e^{-|x|}$, $x\in \mathbb{R}$.

  \begin{enumerate}
\item  $p < \infty$  and  $\pi_j 2^j\rightarrow\infty$ as $j\rightarrow\infty$.
The necessary condition  $n^{ (s+1/2)} \pi_J^{1/p} = O(n^{1/2})$ for the sum and the supremum to be finite, which can be rewritten as  $\pi_J = O(n^{-sp})$, implies that for large enough $j$, we can take $\pi_j = O(2^{-jsp})$ to ensure that $f\in B_{pq}^s$ almost surely.

Consider the case $c   2^{-\beta j} \leq \pi_j \leq C  2^{-\beta j}$ for some $\beta \in (0,1)$, $c, C>0$ for large enough $j$.  For $1 \leqslant q<\infty$,  the necessary and sufficient condition is
\begin{eqnarray*}
&&\lim_{n\to \infty} n^{-q/2} \sum_{j=j_0}^{\lfloor \log_2 n\rfloor-1}  2^{j q(s  +1/2 -\beta/p)} \\
&\asymp&  \lim_{n\to \infty}  \sign(s  +1/2 -\beta/p) [n^{  q(s   -\beta/p)} - n^{-q/2}2^{j_0 q(s  +1/2 -\beta/p)}] <\infty,
\end{eqnarray*}
which holds if and only if $ s  \leq \beta/p$. Note that this condition differs from the one obtained for infinite models given in Section~\ref{sec:mainBesov}  which is $s  < \beta/p -1/2$ for the considered case $\alpha =0$.


For $q=\infty$, the necessary and sufficient condition is
\begin{eqnarray*}
\lim_{n\to \infty}   [n^{  (s   -\beta/p)}I(s  +1/2 >\beta/p) + n^{-1/2}2^{j_0 (s  +1/2 -\beta/p)} I(s  +1/2 \leq \beta/p )] <\infty
\end{eqnarray*}
which holds if and only if $ s  \leq \beta/p-1/2$, and which is the same as in the infinite dimensional case.

\item   $p = \infty$.  Since $H_{+}(x) = 1 - e^{-x} \in D(e^{-e^{-x}})$, $ b_j  =  \log (\pi_j 2^j)$ is a monotonically increasing sequence, $ \log(\pi_j 2^j)/\log j\to \infty$ for $j\to \infty$. Then, necessary and sufficient condition is not satisfied.

\item   $1 \leqslant p \leqslant \infty$, $q <\infty$, $\pi_j 2^j \to const$. Then,
the necessary and sufficient condition holds if $s\leq 1/p$. For the infinite series considered in Section~\ref{sec:mainBesov}, the corresponding condition  $s < 1/p-1/2$ is stronger.

\item   $1 \leqslant p \leqslant \infty$, $q =\infty$, $\pi_j 2^j \to const$.  Here $M(j) =  2^{-j s'}$ is a monotonically increasing function for  $j\geqslant  j_0$ if and only if $s' = s+1/2-1/p <0$, i.e. $s  < 1/p-1/2$. 

\item  If $  \sum_{j=j_0}^{\infty} \pi_j 2^j < \infty$ and $\PP(|\xi|<\infty)=1$, then, for any fixed finite $||u_{j_0}||_p$, $\PP\{ f \in
B_{pq}^s \} =1$ for $p,q \geq 1$ and $s\in (0,r)$.

To summarise: for the Laplace prior distribution, we must have either $\pi_j$ such that $\pi_j 2^j \to \infty$ and $\pi_j = O(2^{-jsp})$ as $j\to \infty$, or $\sum_{j=j_0}^{\infty} \pi_j 2^j < \infty$.

If $\pi_j =C 2^{-\beta j}(1+o(1))$ as $j\to \infty$ for some $\beta \in (0,1)$ then the function belongs to the following Besov spaces $\{B_{p,q}^s: \, p<\infty\,  \& \, s  \leq \beta/p\}$. This includes the set of generalised functions with $s<1/p$.

\end{enumerate}

Therefore, for the Laplacian distribution $H$, given parameters $(\pi_j)$ and $\tau_j = \tau_0$, the prior set is wider in the case $1 \leqslant p \leqslant \infty$, $q <\infty$, $\pi_j 2^j \to const$.

If we also assume some  scale $\tau_j$ decreasing to $0$ for large $j$, then the set of covered Besov spaces is larger.

\end{example}

\begin{example} Consider the case $H$ is a Cauchy distribution with the denssity $h(x) = \frac 1 {\pi(1+x^2)}$, $x\in \mathbb{R}$.
\begin{enumerate}
\item Case \ref{it1:owt2}: $p < \infty$    and  $\pi_j 2^j\rightarrow\infty$ as $j\rightarrow\infty$:
assumption $\E |\xi|^{p} < \infty$ is not satisfied for any $p \geq 1$.

\item Case \ref{it2:owt2}: $p = \infty$. Assume that  $\pi_j 2^j > 1$
 is a monotonically increasing function of $j$ for large $j$.
In this case, $H_+(x) \in D(e^{-x^\ell})$ with $\ell = 2$, and
$$
b_j = \tan\left( \frac{\pi} 2 (1 - (\pi_j
2^j)^{-1})\right) \approx \frac{\pi} 2 (\pi_j 2^j)^{-1}
$$

Then, the necessary and sufficient condition implies that we must have  $n^{s} (\pi_J n)^{-1} =o(1)$ which can hold only if $s<1$ and $\pi_j \gg 2^{-j(1-s)}$.

For $\pi_j =C 2^{-\beta j}(1+o(1))$ with $\beta \in (0,1)$, we must have
\begin{eqnarray*}
\lim_{n\to \infty} 
[n^{q (s-1+\beta)} - n^{-q/2}  2^{j_0 q (s-1/2+\beta)}] &<& \infty, \quad
1 \leqslant q< 2;\\
\quad \lim_{n\to \infty} n^{-1/2}\sup_{j_0 \leq j < \lfloor \log_2 n\rfloor} [2^{j (s-1/2 +\beta) }] &<& \infty, \quad q=\infty,
\end{eqnarray*}
  which are finite if and only if $s\leq 1-\beta$.

For $\pi_j =C j^{-a}(1+o(1))$ with $a>0$, we must have
\begin{eqnarray*}
\lim_{n\to \infty} 
[n^{q (s-1)}[\log n]^{aq+1} - n^{-q/2}  2^{j_0 q (s-1/2)} j_0^{aq+1}] &<& \infty, \quad
1 \leqslant q< 2;\\
\quad \lim_{n\to \infty} n^{-1/2}\sup_{j_0 \leq j < \lfloor \log_2 n\rfloor} [2^{j (s-1/2)} j^{a}] &<& \infty, \quad q=\infty,
\end{eqnarray*}
  which are finite if and only if $s<1$.


In cases \ref{it3:owt2} and \ref{it4:owt2W} ($\pi_j 2^j \to const$ as $j\to \infty$), the necessary and sufficient conditions are not satisfied if $\tau_j = const$.

\item Case \ref{it5:owt2}: $  \sum_{j=j_0}^{\infty} \pi_j 2^j < \infty$, $\PP(|\xi|<\infty)=1$.
 Then, for any fixed finite $||u_{j_0}||_p$, $\PP\{ f \in B_{pq}^s \} =1$ for $p,q \geq 1$ and $s\in (0,r)$.


\end{enumerate}

Therefore, in the non-trivial setting, the Cauchy prior distribution with $\pi_j 2^j$ monotonically increasing with $j$, ensures that the function belongs to $B_{\infty, q}^{s}$ with $s<1$.

\end{example}

\section{Continuous wavelet transform}\label{sec:CWT}

In this section we extend the result stated in Section \ref{sec:mainBesov} to the continuous wavelet transform under the same type of parametrisation.

\subsection{Continuous wavelet transform}\label{sec:ContWavDef}

A wavelet basis $\psi_{jk}(x) = 2^{j/2}\psi (2^j x - k)$ with
dyadic shift and
 scale can be extended to a set of functions with arbitrary shift and scale, i.e.
$\psi_{a, b} (x) = |a|^{1/2} \psi(a(x-b))$, $a\in\mathbb{R}\setminus \{0\}$, $b\in\mathbb{R}$.
This set of functions can be used to perform the continuous wavelet transform
$T^{wav}: \quad L^2(\mathbb{R})\tends L^2(\mathbb{R}\setminus \{0\} \times
\mathbb{R})$ in the following way:
\begin{equation}
T^{wav}f (a,b) = \langle f, \psi_{a,b} \rangle_{L^2},
\end{equation}
where $\langle\cdot, \cdot \rangle_{L^2}$ is the scalar product in
Hilbert space $L^2(\mathbb{R})$. Since $a$ can be interpreted as a
frequency we can restrict ourselves to the case $a>0$. This
corresponds to the case where both $f$ and $\psi$ are
``analytical'' signals, i.e. if $\supp(\hat{f})\subset (0,
\infty)$, $\supp(\hat{\psi}) \subset (0, \infty)$, where $\hat{f}$
is the Fourier transform of $f$, which implies $T^{wav}f(a, b) =0$
for $a<0$.

 If the {\it admissibility condition}: $C_{\psi} = 2\pi
\int_0^{\infty} \frac{|\hat{\psi}(\xi)|^2}{|\xi|} d\xi 
< \infty$
 is satisfied it is possible to recover a function from its
continuous wavelet transform:
\begin{equation}\label{eq:ResId1}
f(x) = C_{\psi}^{-1} \int_{0}^{\infty}da \int_{\mathbb{R}}db
T^{wav}f (a,b) \psi_{a,b}(x).
\end{equation}

In the next section we consider a probabilistic model for functions based on continuous wavelet transform.

\subsection {Probabilistic model}\label{sec:modelCWT}

We assume that wavelet and scaling functions $\psi$ and $\phi$
have compact support $[0, 1]$ under periodic boundary condition and are of regularity $r$.
We model a function $f$ on $[0, 1]$ as a sum of high and low
frequency components $f_w$ and $f_0$:
\begin{eqnarray}\label{eq:ContModel}
f(x) = f_0(x) + f_w(x) = \sum_{i=1}^{M} \eta_{\lambda_i}
\phi_{\lambda_i}(x) + \sum_{\lambda\in S} \omega_{\lambda}
\psi_{\lambda},
\end{eqnarray}
where $\psi_{\lambda}(x) = a^{1/2} \psi(a(x-b))$, $\lambda=(a,b)$,
$a\geqslant a_0=2^{j_0}$, $b\in[0,1]$, and similarly,
$\phi_{\lambda}(x) = a^{1/2} \phi(a(x-b))$, $M<\infty$ and
$\lambda_i$ are such that $a_i\leqslant a_0=2^{j_0}$, $b_i\in[0,
1]$ where $a_0$ is at least twice the length of the support of
functions $\psi$ and $\phi$. The coarse component of the function,
$f_0$,  is considered to be a finite linear combination of scaling
functions with real-valued coefficients $\eta_{\lambda_i}$. For
the high frequency component, we adapt the probabilistic model
(\ref{eq:prior})
  used for the wavelet coefficients of the orthogonal wavelet transform
to the continuous wavelet coefficients
$\omega_{\lambda}$ and the set of their indices $S$.

Defining set $S$ of wavelet indices $\lambda = (a, b) \in [a_0,
\infty)\times[0,1]$ corresponds to selecting a set of indices
where the wavelet coefficients are non-zero. Here it is modelled
as a Poisson process with intensity $\mu_{\lambda}$. The intensity
of the Poisson process $\mu_{\lambda}$ determines the number of
elements of the process around location $\lambda$. This is an
analogue of parameter $\pi_j$ we used for the orthogonal model
(\ref{eq:prior}) which is the proportion of non-zero wavelet
coefficients at resolution level $j$.

Since the distribution of the
wavelet coefficients  $\omega_{\lambda}$ depend on the Poisson process $S$,
 we model the distribution of wavelet coefficients $\omega_{\lambda} $ conditioned
on the Poisson process $S$ in the similar way as the distribution
of non-zero orthogonal wavelet coefficients. Therefore we assume
that $\omega_{\lambda}$ are conditionally independent given $S$
and have distribution $H_{\lambda}$:
\begin{equation}
\omega_{\lambda} \vert S \sim   H_{\lambda} (x),
\end{equation}
where $H_{\lambda} (x) = H(x/\tau_{\lambda})$ is a distribution
function, continuous at $x=0$.

In the next sections we study regularity of the random functions following stochastic expansion~\eqref{eq:ContModel} that can be easily obtained from Theorem~\ref{th:RegOWT}. A more comprehensive study using different tools will be done separately (see Section~\ref{sec:Discussion} for a discussion).

\subsection{Regularity properties under simple parametrisation}

In this section we consider the following parametrisation of variance and intensity of the
Poisson process: $\mu_{\lambda} = C_{\mu} a^{-\beta}$,
$\tau_{\lambda}^2 = C_{\tau} a^{-\alpha}$, $\beta\geqslant 0$,
 $\alpha\geqslant 0$, $C_{\mu}>0$, $C_{\tau}>0$. Hyperparameters
$\alpha$ and $\beta$ have the same interpretation as their
counterparts in the model for the orthogonal wavelet coefficients under the simple parametrisation
in Section~\ref{sec:mainBesov}.

Now we connect the hyperparameters of the model for
wavelet coefficients $\alpha$ and $\beta$ with parameters of the
Besov spaces $p$ and $s$. We split the necessary assumptions into three groups.

\par
{\bf Assumption W}: {\it
 $\psi$ is a compact-supported wavelet function of regularity $r$,
$\psi^{(r)}\in \mathbb{C}^{\rho}$, where $\rho\in (0,1)$ is the
exponent of H\"older continuity; $0 < s < r$ and $r+\rho>1/p-1/2$.
}

We need the regularity of the wavelet function to be greater than
the parameter $s$ of the Besov space in order to be able to use
the equivalence of the Besov sequence norm $b_{p,q}^s$ on wavelet
coefficients and the Besov norm of the function with these
coefficients. We can see that this assumption includes the assumption H
 used in Theorem \ref{th:mainBesov}, with addition  of an extra
regularity condition on the wavelet function which is due to
complexity of the wavelet indices.

\par

{\bf Assumption B}: {\it Assume that $f$ follows \eqref{eq:ContModel} with
\begin{equation}
\omega_{\lambda} \vert S \sim   H_{\lambda} (x),
\end{equation}
where $S$ is a Poisson process on $\lambda=(a, b) \in[a_0,
\infty)\times[0,1]$, $a_0=2^{j_0} \geqslant 2 \max\{ |\supp(\psi)|,
|\supp(\phi)| \}$, with intensity $\mu(\lambda) = C_{\mu}
a^{-\beta}$, $0\leqslant \beta\leqslant 1$, and $H_{\lambda} (x) =
H(x/\tau_{\lambda})$ has finite variance $\tau_{\lambda}^2 =
C_{\tau} a^{-\alpha }$, $\alpha\geqslant 0$. We also assume that
$\beta+\alpha>0$. }

The latter assumption means that at least one of the parameters
$\mu_{\lambda}$ or $\tau^2_{\lambda}$ of the model depends on
$\lambda$ and thus we exclude the case $\beta=\alpha=0$ which
corresponds to functions which almost surely do not belong to
Besov spaces, as for the orthogonal wavelet transform.

The assumption about distribution $H$, {\bf Assumption H}, is the
same as in case of the orthogonal wavelet transform given in
Section~\ref{sec:mainBesov}.


Similarly to the orthogonal
model, in the case $p = \infty$ and $\beta < 1$ we consider two
particular cases: a distribution with the power exponential tail
in the theorem because the criterion for this type of
distributions coincides with the criterion for other cases, and a
distribution with polynomial tail in the proposition after the
theorem.
Now we can give the criterion linking the model for wavelet
coefficients and the parameters of the Besov spaces.

\begin{theorem}\label{th:ContBesov}
Assume that continuous wavelet transform (\ref{eq:ContModel}) of function
$f: [0,1] \rightarrow \mathbb{R}$ follows assumptions W, B, H, for $1\leqslant p, q \leqslant
\infty$ and $r+\rho>(1+\alpha)/2$.  Under Assumption H 2(a) (polynomial tail), we also assume that $\ell >2/(r+\rho+1/2)$.

Then, $f\in B^s_{p,q}$ almost surely if and only if
\begin{eqnarray*}
s  &\leqslant& (\alpha-1)/2+ \beta/p,  \quad\quad \text{if }\, p<\infty, q=\infty, \beta<1,\\
s  &<&   (\alpha-1)/2+ \beta/p+\delta_H,\quad \text{otherwise},
\end{eqnarray*}
where $\delta_H$ is defined by \eqref{eq:defDeltaH}.
\end{theorem}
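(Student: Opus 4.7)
The plan is to reduce this continuous wavelet transform statement to the orthogonal wavelet transform statement of Theorem~\ref{th:mainBesov} by computing the orthogonal wavelet coefficients of $f$ arising from \eqref{eq:ContModel} and verifying that they satisfy (up to constants) the hypotheses of Theorem~\ref{th:mainBesov} with the same hyperparameters $(\alpha,\beta)$. Since regularity of $\psi$ and $\phi$ is $r>s$, the Besov norm $B^s_{p,q}[0,1]$ is equivalent to the sequence norm $b^s_{p,q}$ on the orthogonal wavelet coefficients, so it suffices to analyze the random sequence $w_{jk}=\langle f,\psi_{jk}\rangle$.

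First, I would partition the parameter set $[a_0,\infty)\times[0,1]$ into dyadic cells $\Lambda_{j,k}=[2^j,2^{j+1})\times[k/2^j,(k+1)/2^j)$, $j\geq j_0$, $0\le k<2^j$. Under the Poisson model with intensity $\mu_\lambda=C_\mu a^{-\beta}$ the cell mass is $\int_{\Lambda_{j,k}}\mu_\lambda\,d\lambda\asymp 2^{-\beta j}$, independently across $(j,k)$, so the number of Poisson points in $\Lambda_{j,k}$ is Poisson with mean $\asymp 2^{-\beta j}$, which matches the Bernoulli probability $\pi_j\asymp 2^{-\beta j}$ in the orthogonal model (the Poisson/Bernoulli distinction is immaterial for a.s.\ finiteness of $\|w\|_{b^s_{p,q}}$ at small means, and for $\beta\in[0,1)$ with large means one uses the law of large numbers for Poisson counts analogous to Lemma~\ref{lem:LLN}). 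Given a point $\lambda\in\Lambda_{j,k}$, the coefficient $\omega_\lambda$ has scale $\tau_\lambda=\sqrt{C_\tau}\,a^{-\alpha/2}\asymp 2^{-\alpha j/2}$ and distribution $H$, matching $\tau_j^2\asymp 2^{-\alpha j}$.

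Next I would decompose
\begin{equation*}
w_{jk}=\sum_{\lambda\in S}\omega_\lambda\langle\psi_\lambda,\psi_{jk}\rangle
=\sum_{j'\geq j_0}\sum_{k'}\;\sum_{\lambda\in S\cap\Lambda_{j',k'}}\omega_\lambda\langle\psi_\lambda,\psi_{jk}\rangle,
\end{equation*}
and split into the diagonal block ($j'=j$, $k'=k$ and a bounded number of neighbours induced by the support of $\psi$) and the off-diagonal residual. The diagonal block reproduces exactly the orthogonal binomial-plus-$H$-scaled model of Theorem~\ref{th:mainBesov} with parameters $\pi_j,\tau_j$ as above, so by Theorem~\ref{th:mainBesov} its $b^s_{p,q}$-norm is a.s.\ finite iff the stated criterion on $(s,\alpha,\beta,p,q,H)$ holds. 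The converse direction uses the same diagonal block to produce a lower bound on $\|w_{jk}\|$, giving necessity.

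The main obstacle is the off-diagonal residual. For this I would invoke the almost-diagonal bound, valid under Assumption~W with $\psi\in C^{r+\rho}$ and $r$ vanishing moments,
\begin{equation*}
|\langle\psi_{a,b},\psi_{jk}\rangle|\leq C\bigl(\min(a,2^j)/\max(a,2^j)\bigr)^{r+\rho+1/2}\bigl(1+\min(a,2^j)|b-k/2^j|\bigr)^{-(r+\rho)}.
\end{equation*}
Multiplying by the scale $|\omega_\lambda|\asymp 2^{-\alpha j'/2}|\xi|$ and summing over dyadic cells, the off-diagonal contribution to $\|w_j\|_p^p$ is controlled by a geometric series in $|j-j'|$ with ratio $2^{-(r+\rho+1/2-\alpha/2)}$, which is summable precisely because $r+\rho>(1+\alpha)/2$. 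The tail assumption $\ell>2/(r+\rho+1/2)$ in the polynomial case ensures that after taking suprema over the random $\omega_\lambda$ within a cell (whose maximum grows like $(2^j\pi_j)^{1/\ell}$), the scale-across sum of off-diagonal contributions remains of strictly smaller order than the diagonal block almost surely. The remaining $L^p$ and supremum estimates then follow by combining the diagonal analysis from Theorem~\ref{th:mainBesov} with these a.s.\ negligible error terms, yielding the stated necessary and sufficient condition.
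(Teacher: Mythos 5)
Your overall strategy --- express $w_{jk}=\sum_{\lambda\in S}K(\lambda,\lambda_{jk})\omega_\lambda$, match the cell mass $\asymp 2^{-\beta j}$ and scale $\asymp 2^{-\alpha j/2}$ to the orthogonal hyperparameters, and reduce to Theorem~\ref{th:mainBesov} --- is the right skeleton, and your necessity step (lower-bounding by a ``diagonal'' sub-collection of independent coefficients) is essentially what the paper does: it restricts to disjoint rectangles ${\cal I}_{jk}$ on which $K_0>1/2$, obtains independent $w^0_{jk}$ with $|w_{jk}|\geqslant|w^0_{jk}|$ stochastically, and invokes the orthogonal theorem. But there are two genuine gaps. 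First, the diagonal block does \emph{not} ``reproduce exactly'' the model \eqref{eq:prior}: a cell may contain several Poisson points, and each $\omega_\lambda$ is modulated by a kernel value that varies over the cell and can be arbitrarily small, so the diagonal contribution is a compound-Poisson variable whose conditional law is not a scaled copy of $H$. This matters because Theorem~\ref{th:mainBesov} in the $p=\infty$ cases is sensitive to the exact tail class of the nonzero coefficients; one must either restrict to a sub-cell where the kernel is bounded below (for necessity, as the paper does) or argue separately that the relevant tail/moment conditions are stable under such modulation. As stated, you cannot simply ``apply Theorem~\ref{th:mainBesov}'' to the diagonal block.

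Second, and more seriously, the sufficiency for $p=\infty$, $\beta<1$ --- the only case of Theorem~\ref{th:ContBesov} that does not follow from the general Theorem~\ref{th:ContBesov_Gen}, and the case where the extra hypothesis $\ell>2/(r+\rho+1/2)$ is actually needed --- is asserted in one sentence rather than proved. Here $\max_k|w_{jk}|$ is not a maximum of independent variables: a single coarse-scale Poisson point contributes to all $2^{j-j'}$ coefficients $w_{jk}$ under its support, so the off-diagonal part of the level-$j$ maximum is governed by maxima of heavy-tailed sums over growing index sets, and one cannot control $\sup_{j}2^{js'}\max_k|w_{jk}|$ by taking expectations (note $\E\sup_j\neq\sup_j\E$). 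You claim the off-diagonal term is ``of strictly smaller order almost surely'' but give no mechanism for the almost-sure statement. The paper's device is concrete: apply the Markov inequality to $\PP\{\max_k|w_{jk}|>2^{\gamma j}\}$ with a fractional moment of order $\varkappa$ satisfying $2/(r+\rho+1/2)<\varkappa<\ell$ (using the moment bound of Lemma~\ref{lem:moment}), choose $\gamma\in\bigl((1-\beta)/\varkappa-\alpha/2,\,-s'\bigr)$, and conclude by Borel--Cantelli; the nonemptiness of the admissible $(\gamma,\varkappa)$ range is exactly where $\tilde\delta<0$ and $\ell>2/(r+\rho+1/2)$ enter. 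Your proposal needs an equivalent quantitative step; without it the ``if'' direction in the hardest case is not established.
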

This theorem follows from Theorem~\ref{th:ContBesov_Gen} with $\mu(a) = C_\mu a^{-\beta}$ and $\tau(a) = \sqrt{C_\tau} a^{-\alpha/2}$ except for the case $p=\infty$ and $\beta <1$ which is proved in Appendix~\ref{sec:ProofCWT_prop}. If $H$ is normal, the necessary and sufficient condition coincides with the one given in \citet{p:abr-sap-silv-00}.

 We can see that the necessary and
sufficient criteria for a function to belong to the Besov spaces
are the same for the
continuous as for the orthogonal wavelet
transform. Note that similarly to the case of the orthogonal
wavelet transform, in case $\beta > 1$ the expected number
$\int_{a_0}^{\infty} \int_0^1 a^{-\beta} da db$ of  non-zero
wavelet coefficients $\omega_{\lambda}$ is finite. Therefore, for
$\beta > 1$, function $f$ belongs to the same Besov space as the
wavelet function, i.e. $B_{p,q}^s$ with $1\leqslant p,q \leqslant
\infty$ and $0<s<r$.


\subsection{Regularity properties under general parametrisation}

Now we consider a more general parametrisation where we
 assume that the intensity $\mu_\lambda$ and the scale $\tau_\lambda$ are independent of the shifting parameter
$b$ and decrease as the scaling parameter $a$ increases. Assumption $B$ is replaced with Assumption $B'$.

{\bf Assumption $B'$}: {\it Assume that $f$ follows \eqref{eq:ContModel} with
\begin{equation}
\omega_{\lambda} \vert S \sim   H_{\lambda} (x),
\end{equation}
where $S$ is a Poisson process on $\lambda=(a, b) \in[a_0,
\infty)\times[0,1]$, $a_0=2^{j_0} \geqslant 2 \max\{ |\supp(\psi)|,
|\supp(\phi)| \}$, with intensity $\mu(\lambda) = \mu(a)$, and $H_{\lambda} (x) =
H(x/\tau_{\lambda})$ has finite variance $\tau_{\lambda}^2 =
[\tau(a)]^2$ such that
\begin{enumerate}
\item[1)]  functions $\mu(a)\geq 0$, $\tau(a) \geq 0$ are decreasing,

\item[2)] $\mu(a) \to 0$ or $\tau(a) \to 0$ as $a\to \infty$,

\item[3)] function $\tau(a)$ is continuous,

\item[4)] for $p<\infty$,  $\int_0^{a_0} a^{p(\rho+r + 1/2)-1} [\tau(a)]^p \mu(a) da<\infty$.

\end{enumerate}
 }

Note that in case $\tau(a) = C a^{-\alpha/2}$ considered by \citet{p:abr-sap-silv-00}, their assumption $r+\rho > (1+\alpha)/2 $ is stated here as assumption $B'$4).

\begin{theorem}\label{th:ContBesov_Gen}
Assume that continuous wavelet transform (\ref{eq:ContModel}) of function
$f: [0,1] \rightarrow \mathbb{R}$ follows
assumptions W and $B'$  for $1\leqslant p < \infty$, $1\leq q \leq \infty$. Denote $s' = s-1/p+1/2$ and $\xi \sim H$.

\begin{enumerate}
\item\label{it1:cwt} $1\leq q < \infty$, $1\leq p<\infty$, assume that $\E |\xi|^{p} < \infty$ and

either    $\mu(2^j) 2^j$ increases to $\infty$ as $j\rightarrow\infty$ monotonically   for large  $j$,

or    $\mu(2^j)  2^j \to const$ as $j\to \infty$ and $\E |\xi|^{q} < \infty$.

Then, for any fixed finite $||u_{j_0}||_p$,
\begin{eqnarray*}
\PP\{ f \in B_{p q}^s \} =1 \quad \Leftrightarrow \quad
\sum_{j=j_0}^{\infty} \left[2^{j (s' +1/p)} \tau_j
[\mu(2^j)]^{1/p}\right]^q < \infty.
\end{eqnarray*}

\item\label{it2:cwt} $q=\infty$, $1 \leqslant p < \infty$:
assume  $\E |\xi|^{p} < \infty$, $\mu(2^j) 2^j\rightarrow\infty$ as $j\rightarrow\infty$ and $\mu(2^j) 2^j$ is a monotonically increasing sequence  for large enough $j$.
 Then, for any fixed finite $||u_{j_0}||_p$,
$$\PP\{ f \in B_{pq}^s \} =1\quad \Leftrightarrow\quad \sup_{j \geqslant j_0} [2^{j (s' +1/p)} \tau_j [\mu(2^j)]^{1/p}] < \infty.$$

\end{enumerate}
\end{theorem}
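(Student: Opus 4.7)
The plan is to parallel the structure of the proof of Theorem~\ref{th:RegOWT} by discretising the Poisson process $S$ into dyadic scale bands, and then comparing with the orthogonal situation. Write $f_w = \sum_{j\geqslant j_0} g_j$ with $g_j = \sum_{\lambda \in S_j} \omega_\lambda \psi_\lambda$, where $S_j = S \cap \{(a,b): a \in [2^j, 2^{j+1}),\, b\in[0,1]\}$. By the mapping theorem for Poisson processes, $N_j := |S_j|$ is Poisson with mean $\Lambda_j = \int_{2^j}^{2^{j+1}}\mu(a)\,da$, and monotonicity of $\mu$ (Assumption $B'$, item 1) gives $\Lambda_j \asymp 2^j \mu(2^j)$. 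Moreover, conditionally on $S_j$, the indices $\lambda = (a,b)$ are i.i.d.\ with $b$ uniform on $[0,1]$ and $a$ on $[2^j, 2^{j+1})$ with density proportional to $\mu(a)$; conditionally on $\lambda$, the coefficient $\omega_\lambda$ has distribution $H(\cdot/\tau_\lambda)$ with $\tau_\lambda = \tau(a) \asymp \tau(2^j) =: \tau_j$ by continuity and monotonicity of $\tau$ (Assumption $B'$, items 1 and 3).

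Next I would establish a dyadic Besov-type equivalence of the form
\begin{equation*}
\|f_w\|_{B_{p,q}^s} \asymp \Bigl\|\bigl(2^{js'}\|g_j\|_p\bigr)_{j\geqslant j_0}\Bigr\|_{\ell^q},
\end{equation*}
using Assumption W ($r+\rho$ large enough) so that the overcomplete wavelet family satisfies an atomic decomposition with the required decay; this is the same ingredient used by \citet{p:abr-sap-silv-00} and explains the role of condition $B'$4 (which guarantees that the coarse scale interaction is summable in $L^p$). For each block I then estimate $\|g_j\|_p$. Because $\psi_\lambda(x) = a^{1/2}\psi(a(x-b))$ has support of length $\asymp 2^{-j}$ for $\lambda \in S_j$, the atoms at a given scale have essentially finite overlap, so
\begin{equation*}
\|g_j\|_p^p \asymp 2^{j(p/2 - 1)} \sum_{\lambda \in S_j} |\omega_\lambda|^p,
\end{equation*}
with the factor $2^{j(p/2-1)}$ coming from $\|\psi_\lambda\|_p^p$. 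Now Lemma~\ref{lem:LLN} (the SLLN for random-length sequences) yields, in the regime $\mu(2^j)2^j\to\infty$,
\begin{equation*}
\frac{1}{N_j}\sum_{\lambda\in S_j}|\omega_\lambda|^p \longrightarrow \tau_j^p\,\E|\xi|^p \quad \text{a.s.},
\end{equation*}
under the moment hypothesis $\E|\xi|^p<\infty$. Combining, $\|g_j\|_p \asymp 2^{j(1/2 - 1/p)} [\mu(2^j)]^{1/p}\tau_j$ almost surely up to a constant depending on $\|\psi\|_p$ and $\E|\xi|^p$, which after insertion into the dyadic Besov norm produces the series $\sum_j[2^{j(s'+1/p)}\tau_j\mu(2^j)^{1/p}]^q$ (case 1, $q<\infty$) or its supremum (case 2, $q=\infty$). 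In the bounded regime $\mu(2^j)2^j\to const$, the SLLN is replaced by a Borel--Cantelli argument using $\E|\xi|^q<\infty$, exactly as in part~\ref{it3:owt} of Theorem~\ref{th:RegOWT}.

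The main obstacle is establishing the dyadic equivalence of the first display rigorously for the overcomplete system, because unlike the orthogonal case we cannot extract $\|g_j\|_p$ from projection onto an orthonormal block. This is where assumption $B'$4 and the extra regularity $r+\rho$ enter: they control the $L^p$-action of atoms from scale $\ell \neq j$ on scale $j$, yielding an almost-diagonal operator whose sequence-norm boundedness converts the abstract Besov norm of $f_w$ into a dyadic one. A secondary technicality is that within a scale bin the $a$-variable is not constant, so one must absorb the $\asymp$ using uniform continuity of $\log\tau$ on $[2^j, 2^{j+1})$; this is harmless because the resulting constants depend only on $\psi$ and on the ratios $\tau(2^j)/\tau(a)$, which remain bounded by monotonicity. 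The ``only if'' direction follows from the same two-sided inequalities: if the series in the statement diverges, then $\|f_w\|_{B_{p,q}^s} = \infty$ almost surely by a zero-one argument on the tail Poisson increments, exactly paralleling the orthogonal proof.
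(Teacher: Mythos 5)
Your argument hinges on two equivalences that you do not establish and that in fact fail for an overcomplete system. First, the two-sided dyadic equivalence $\|f_w\|_{B_{p,q}^s} \asymp \|(2^{js'}\|g_j\|_p)_j\|_{\ell^q}$ is not available here: for a redundant dictionary the almost-diagonal/atomic-decomposition machinery (which is what Assumption W and $B'$4 buy you) gives only the \emph{synthesis} direction, i.e.\ an upper bound on the Besov norm in terms of the coefficients. The reverse inequality is false in general because the representation is non-unique --- large coefficients on nearly coincident atoms can cancel, so divergence of your block sequence norm does not force $\|f_w\|_{B_{p,q}^s}=\infty$. You name this as ``the main obstacle'' but then assert it is handled by $B'$4 and the regularity of $\psi$; it is not, and this is precisely where the necessity direction of your proof breaks. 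Second, the block estimate $\|g_j\|_p^p \asymp 2^{j(p/2-1)}\sum_{\lambda\in S_j}|\omega_\lambda|^p$ is also unjustified in both directions: the locations $b$ are Poisson-distributed, so atoms within a band are not uniformly separated (the overlap multiplicity is unbounded), which invalidates the upper bound for $p>1$, and sign cancellation between overlapping atoms invalidates the lower bound even with bounded overlap.

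The paper's proof avoids both problems by never trying to read the Besov norm off the random atoms directly. For sufficiency it computes the \emph{orthonormal} wavelet coefficients $w_{jk}=\sum_{\lambda\in S}K(\lambda,\lambda_{jk})\omega_\lambda$ via the reproducing kernel, bounds $\E|w_{jk}|^p$ using the kernel decay \eqref{eq:BoundK0} together with a moment identity for Poisson sums (Corollary~\ref{cor:ASS1}, packaged as Lemma~\ref{lem:moment}), and concludes from $\E\|w\|_{b^s_{p,q}}<\infty$ --- a first-moment argument, with no law of large numbers and no need for a lower bound at this stage. For necessity it restricts the Poisson sum to disjoint rectangles ${\cal I}_{jk}$ on which the kernel is bounded below by $1/2$, producing independent coefficients $w'_{jk}$ with $|w_{jk}|\geqslant|w'_{jk}|\geqslant|w^0_{jk}|$ stochastically, where $w^0_{jk}$ follows the orthogonal mixture model with $\pi_j\asymp\mu(2^j)$ and scale $\asymp\tau(2^j)$; divergence then follows from Theorem~\ref{th:RegOWT}. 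If you want to salvage your approach you would need to replace your claimed equivalences with exactly this kind of one-sided comparison in each direction, at which point you have essentially reconstructed the paper's argument.
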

Proof of the theorem is given in Appendix \ref{sec:ProofCWT}. Note that we consider only the case $p<\infty$, since in this case the maximum $\max_{k}|w_{jk}|$ is no longer a maximum of independent random variables which affects the necessary and sufficient condition and hence  requires a different type of proof. This case is being investigated as a part of ongoing work on the necessary and sufficient condition on $(\tau, \mu, H)$ for $f\in B_{pq}^s$ almost surely.

\section{Discussion}\label{sec:Discussion}

The main results of this paper are necessary and sufficient conditions establishing the connection between
the regularity properties of a function in terms of its Besov norm and a probabilistic model for its wavelet coefficients.
The necessary and sufficient conditions for the two parameter models are the same for the orthogonal
and the continuous wavelet transform, however for the continuous transform we make an additional assumption of
finite variance of the distribution of the continuous wavelet coefficients and also we need a more detailed
information about the regularity of the wavelet function.



The obtained results can  be used to compare the effect of
different tail behaviour of the wavelet coefficients on the
regularity properties of the function. For instance, it follows
that with probability 1 the Besov spaces contain more functions
whose wavelet coefficients have distribution with normal or
exponential tail than those with polynomial tail. It also follows
from the assumptions of the theorems that the more moments a
distribution has, the wider is the set of the corresponding Besov
spaces. In case $p=\infty$ and  with  the number of nonzero
wavelet coefficients increasing at higher levels (e.g. $\beta <1$
if $\pi_j\sim c2^{-\beta j}$), the necessary and sufficient
condition depends on the tail of the distribution directly, i.e.
the heavier the tail is, the narrower is the set of the
corresponding Besov spaces as we saw on the example of $t$ versus
normal and Laplacian distributions.

An important application of the results of this paper is to study a priori Besov membership of Bayesian wavelet estimators.
We study the actual prior model that is used in Bayesian nonparametric wavelet regression, and show that the corresponding a priori class of function is wider than it is under the infinite dimensional ``idealisation'' of this prior that has been previously investigated in the literature.
Also, these results can be used in Bayesian regression modelling
 to specify the probabilistic distribution of wavelet coefficients if there is some
 information available about the regularity of the function of interest.
If the regularity of function is known a priori, it can be used to
specify the hyperparameters of the prior model for wavelet
coefficients and to choose the distribution of non-zero wavelet
coefficients. The results obtained in this paper can be applied to
many prior models, and thus a necessary and sufficient condition
of a priori Besov membership of a function can be compared to
other properties of the Bayesian models, e.g. frequentist
optimality over the Besov spaces, thus making a more precise
statement compared to that in \cite{p:pensky-2003} and identifying the Besov spaces that are not covered almost surely by the chosen prior distribution.

The results for the continuous wavelet transform can be applied to
a wavelet transform on an irregular grid which corresponds to
various applications where irregularity of the grid can be
modelled as a Poisson process. This result can be extended
to other probability models of irregularity.

Another question of interest, perhaps more from a probabilistic perspective, is a necessary and sufficient condition for the unknown function to belong to Besov spaces on the hyperparameters and distribution $H$  simultaneously, rather than for a given $H$. This is beyond the scope of this paper and is work in progress.


\appendix

\section{Law of large numbers for a sequence of random length}

\begin{lemma}\label{lem:LLN}
 Let $\xi_{jk}$ be independent  positive random variables $k=0,1,\dots, 2^j-1$, $j=j_0, j_0+1, \ldots$  that have distribution function  $G(x)$ and finite mean: $\mu =\int_0^{\infty} x d G(x) < \infty$.

Let $N_j \sim Bin(2^j, \pi_j)$ be independent random variables, independent of $(\xi_{jk})$, such that $\E N_j = n_j \stackrel{def}{=} 2^j \pi_j \to \infty$ as $j\to \infty$ such that sequence $n_j$ monotonically increases  for large enough $j$.

Denote $S_j =  \sum_{k=1}^{N_j} \xi_{jk}$. Then,
$\frac{S_j}{n_j} \rightarrow
\mu$ with probability 1 as $j \rightarrow \infty$.

\end{lemma}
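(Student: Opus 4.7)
The plan is to factor $S_j/n_j = (N_j/n_j)\cdot(S_j/N_j)$ on the event $\{N_j>0\}$ and to prove that the first factor tends to $1$ and the second to $\mu$, both almost surely, from which the lemma follows. The event $\{N_j=0\}$ is dealt with first: since $\PP(N_j=0)=(1-\pi_j)^{2^j}\leq e^{-n_j}$ and $n_j$ is monotonically increasing to infinity, these probabilities are summable in $j$, so Borel--Cantelli yields $N_j>0$ eventually, almost surely.

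For the first factor I would apply a Bernstein/Chernoff concentration bound for the binomial, giving $\PP(|N_j/n_j - 1| > \epsilon) \leq 2\exp(-c_\epsilon n_j)$ for every $\epsilon > 0$ and some $c_\epsilon>0$. The monotone growth $n_j \uparrow \infty$ makes this bound summable in $j$, and Borel--Cantelli then gives $N_j/n_j \to 1$ almost surely; in particular, $N_j \to \infty$ almost surely.

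For the second factor I would argue in the style of a random-index (Anscombe-type) SLLN. By independence, for each fixed $j$ the sequence $\xi_{j,1},\xi_{j,2},\ldots$ is iid with mean $\mu$, so by Kolmogorov's SLLN $\bar\xi_{j,n}:=n^{-1}\sum_{k=1}^n\xi_{jk} \to \mu$ almost surely as $n\to\infty$. Since $N_j$ is independent of $(\xi_{jk})$ and tends to infinity almost surely, conditioning on $N_j$ and combining the SLLN-based tail bound $\PP(\sup_{n\geq M}|\bar\xi_{j,n}-\mu|>\epsilon)\to 0$ as $M\to\infty$ with a Borel--Cantelli argument across $j$ should give $S_j/N_j=\bar\xi_{j,N_j}\to\mu$ almost surely.

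The main obstacle is the second step, because only $\E|\xi|<\infty$ is assumed, so there is no variance control on the sample means and $N_j$ is itself random. The standard remedy is truncation at a level $A$: set $\xi_{jk}':=\xi_{jk} I(\xi_{jk}\leq A)$, so that the truncated sums have finite variance and a Chebyshev + Borel--Cantelli estimate applies to $N_j^{-1}\sum_{k=1}^{N_j}\xi_{jk}'$; then bound the residual by $\E[\xi I(\xi>A)]\to 0$ as $A\to\infty$ and let $A\to\infty$ along an increasing sequence, exploiting monotonicity in $A$ to transfer the almost-sure convergence to the untruncated sums. This parallels exactly the truncation step in the classical proof of Kolmogorov's SLLN.
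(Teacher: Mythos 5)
Your decomposition $S_j/n_j=(N_j/n_j)\cdot(S_j/N_j)$ is a reasonable starting point, but two of the steps as described would fail. The first problem is summability: neither your Chernoff bound $\sum_j e^{-c_\epsilon n_j}$ for the factor $N_j/n_j$ nor the Chebyshev bound for the truncated averages, which is of order $\sum_j n_j^{-1}$, is finite merely because $n_j$ increases monotonically to infinity (try $n_j=\log j$ or $n_j=j$). The paper closes exactly this gap with Etemadi's subsequence device: it applies Chebyshev only along a geometric subsequence $m(j)$ chosen so that $n_{m(j)}=[\alpha^j]$, where $\sum_j n_{m(j)}^{-1}<\infty$, then sandwiches the intermediate indices using positivity of the summands and lets $\alpha\downarrow 1$. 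Nothing in your proposal plays this role, so the Borel--Cantelli arguments do not close. (The paper also never needs $N_j/n_j\to 1$ as a separate step: it conditions on $N_j$ and absorbs its fluctuations into the bound $\Var(T_j)\le n_j\,\E Y_{jk}^2$.)

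The second problem is the truncation scheme. Truncating at a \emph{fixed} level $A$ only yields the lower bound: from $S_j/N_j\ge N_j^{-1}\sum_k\xi_{jk}I(\xi_{jk}\le A)$ you get $\liminf_j S_j/N_j\ge\E[\xi I(\xi\le A)]$ for every $A$, hence $\ge\mu$. For the upper bound the residual $R_j=N_j^{-1}\sum_k\xi_{jk}I(\xi_{jk}>A)$ does not vanish: it has mean $\E[\xi I(\xi>A)]>0$ for every $j$, Markov gives only the non-summable bound $\PP(R_j>\epsilon)\le\E[\xi I(\xi>A)]/\epsilon$, and establishing $\limsup_j R_j\le\E[\xi I(\xi>A)]$ almost surely is precisely the lemma applied to the variables $\xi I(\xi>A)$, which again have only a first moment --- the argument is circular. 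This is why the classical Kolmogorov--Etemadi proof, and the paper's adaptation of it, truncate at an \emph{index-dependent} level, $Y_{jk}=\xi_{jk}I(\xi_{jk}\le 2^j)$: then $\sum_j 2^j\PP(\xi>2^j)\le\E\xi<\infty$, so by the first Borel--Cantelli lemma only finitely many terms of the whole array are ever altered, and $S_j-T_j$ remains bounded almost surely. Replacing your fixed-$A$ truncation by this one, and inserting the geometric-subsequence argument, essentially reconstructs the paper's proof.
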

This is the law of large numbers for sequences with a random number of coefficients.

\begin{proof}[Proof of Lemma \ref{lem:LLN}]
We generalise the proof of the law of large number of sequences with a fixed number of elements given in \citet{p:Etemadi}. 

1. Denote $Y_{jk} = \xi_{jk} I(\xi_{jk} \leq 2^j)$. Since, for all $k$,
$$
\sum_{j=j_0}^{\infty} \sum_{k=0}^{2^j-1} \PP(\xi_{jk} > 2^j )=\sum_{j=j_0}^{\infty}  2^j \PP(\xi_{jk} > 2^j )\leq \int_{2^{j_0}}^{\infty}  \PP(\xi_{jk} > t) dt \leq  \E \xi_{jk} <\infty,
$$
we have $\PP(Y_{jk} \neq \xi_{jk} \, i.o. \, j,k) =0$. Therefore $ |S_j(\omega)  - T_j(\omega) |\leq R(\omega)<\infty$ a.s. for all $j$ where $T_j(\omega)=  \sum_{k=1}^{N_j} Y_{jk}(\omega)$, and therefore it is sufficient to show that $T_j/n_j \to \mu$ a.s.

\vspace{0.2cm}

2. Now we study the mean and variance of $T_j$:
\begin{eqnarray*}
\E(T_j) &=&  \E \left(\E\left(\sum_{k=1}^{N_j} Y_{jk} \mid N_j\right)\right) =  \E N_j \E  Y_{jk} = n_j \E(Y_{jk} ),
\end{eqnarray*}
and
\begin{eqnarray*}
\Var(T_j) &=&  \E \Var(\sum_{k=1}^{N_j} Y_{jk} \mid N_j) +  \Var[ \E (\sum_{k=1}^{N_j} Y_{jk} \mid N_j)]\\
 &=&  \E N_j \Var(  Y_{jk} ) +  \Var[N_j \E Y_{jk}] = n_j \Var(Y_{jk} ) +  n_j (1-\pi_j) [\E Y_{jk}]^2 \leq n_j \E Y_{jk}^2.
\end{eqnarray*}

\vspace{0.2cm}

3. Take $m(j)$ such that $n_{m(j)} = [\alpha^j]$ for some $\alpha >1$. The Chebyshev inequality implies that for any $\epsilon > 0$,
\begin{eqnarray*}
  \sum_{j=j_0}^{\infty}  \PP( |T_{m(j)} - \E T_{m(j)}| > \epsilon n_{m(j)} ) &\leq&  \epsilon^{-2} \sum_{j=j_0}^{\infty}  \Var(T_{m(j)})  n_{m(j)}^{-2}\\
&\leq& \epsilon^{-2} \sum_{j=j_0}^{\infty}   n_{m(j)}^{-1} \E (Y_{m(j), k}^2).
\end{eqnarray*}
Now,
\begin{eqnarray*}
 \sum_{j=j_0}^{\infty}  n_{m(j)}^{-1} \E (Y_{m(j), k}^2) &=& \sum_{j=j_0}^{\infty} n_{m(j)}^{-1} \int_0^{\infty} 2 y \PP(Y_{m(j), k} >y) dy  \\
 &\leq & \int_0^{\infty} 2 y  \left[ \sum_{j=j_0}^{\infty}   n_{m(j)}^{-1}  I( y < 2^{m(j)}) \right] \PP(\xi_{1, 1} >y) dy   \\
 &\leq & 2 [1-1/\alpha]^{-1}\E\xi_{1, 1}  < \infty
\end{eqnarray*}
since
\begin{eqnarray*}
  y  \sum_{j=j_0}^{\infty}   n_{m(j)}^{-1}  I( y < 2^{m(j)}) &\leq&  y  \sum_{j=j_0}^{\infty}   \alpha^{-j}  I( 2^{m(j)} \pi_{m(j)} > y)  \leq y  \sum_{j\geq j_0 \, \& \, j > \log_{\alpha} y} \alpha^{-j}  \\ &\leq&   C y   \, \alpha^{-\max(\log_\alpha y, j_0)} \leq C.
\end{eqnarray*}

Since $\epsilon >0$ is arbitrary, this implies that $n_{m(j)}^{-1}(T_{m(j)} - \E T_{m(j)}) \to 0$ as $j\to \infty$. The dominated convergence theorem implies $\E Y_{jk} \to \E \xi_{jk} $ as $j\to \infty$, hence $n_{m(j)}^{-1}T_{m(j)} \to \E \xi_{1,1} = \mu$ a.s. For the intermediate values $m(j) < \ell < m(j+1) $, we use
$$
\frac{T_{m(j)}}{ m(j+1) } \leq \frac{T_{\ell}}{ \ell } \leq \frac{T_{m(j+1)}}{ m(j) }
$$
due to $Y_{jk}\geq 0$, and since $m(j+1)/m(j) = [\alpha^{j+1}]/[\alpha^{j}] \to \alpha$, we have
$$
\frac 1 {\alpha} \mu \leq \liminf_{\ell \to \infty} \frac{T_{\ell}}{ \ell } \leq \limsup_{\ell \to \infty} \frac{T_{\ell}}{ \ell } \leq \alpha \mu.
$$
Since $\alpha >1$ is arbitrary, the proof is complete.


\end{proof}

\section{Proof of Theorem 2}\label{seq:ProofDWT}


Below we consider vector $z_j$ of
normalised wavelet coefficients $z_{jk} = \tau_{j}^{-1} w_{jk}$,
$k = 0, \dots, 2^j-1$. Its absolute value $|z_{jk}|$ has the
distribution function $1 -\pi_j [1-H(x) + H(-x)]$, $x\geqslant 0$.
We will denote generic positive constants by $C$
which may take different values even within a single equation.



\ref{it1:owt}.
Let $\nu_p$ be the $p$th absolute moment of a
distribution $H$, $\nu_p < \infty$. Then the mean of $||z_j||_p^p$
can be written as
\begin{eqnarray*}
\E||z_j||_p^p &=& \E \left( \sum_{k=0}^{2^j-1} |z_{jk}|^p \right) =
\sum_{k=0}^{2^j-1} \E|z_{jk}|^p =\nu_p  \pi_j 2^j.
\end{eqnarray*}
Under the assumptions of the theorem, by Lemma~\ref{lem:LLN},  $(\pi_j 2^j)^{-1}  ||z_j ||_p^p \rightarrow \nu_p$ almost surely as $j
\rightarrow \infty$. Below we abbreviate ``almost surely'' by a.s.

 For a finite $q$ the Besov norm of wavelet coefficients can be
represented in terms of $||z_j||_p$:
\begin{eqnarray*}
||w||_{b^s_{p,q}} &=& ||u_{j_0}||_p+ \left[\sum_{j=j_0}^{\infty}
2^{js' q} \left( \sum_{k=0}^{2^j-1} |z_{jk}|^p \tau_j^p
\right)^{q/p} \right]^{1/q}
\end{eqnarray*}
which is finite a.s. if and only if the series
$\sum_{j=j_0}^{\infty} 2^{jq (s'+1/p)} \tau_j^q \pi_j^{q/p}$ is
finite.

For infinite $q$ the Besov norm of wavelet coefficients is represented in terms
of $|| z_j||_p$ in the following way:
\begin{eqnarray*}
||w||_{b^s_{p,\infty}} &=& ||u_{j_0}||_p + \sup_{j \geqslant
j_0} \left\{ 2^{js'} \left( \sum_{k=0}^{2^j-1} |w_{jk}|^p
\right)^{1/p} \right\} \\&=& ||u_{j_0}||_p + \sup_{j \geqslant j_0}
\left\{ 2^{j (s'+1/p)} \tau_j \pi_j^{1/p} [(\pi_j 2^j)^{-1} ||
z_j||_p] \right\}.
\end{eqnarray*}
The supremum is finite a.s. if and only if $\lim_{j\to
\infty} 2^{j (s'+1/p)} \tau_j \pi_j^{1/p} < \infty$.

Now we apply Theorem 2 of \citet{p:don-joh-98}
that under the assumptions of the theorem the finiteness of the
Besov norm of the wavelet coefficients is equivalent to the
finiteness of the Besov norm of the function.

\par

\ref{it2:owt}. The Besov sequence norm in  case $p=\infty$ is expressed in terms
of the following random variable: $\xi_j = || z_j||_{\infty} =
\max_{k=0, \dots, 2^j-1} (|z_{jk}|)$. The distribution function of
$|z_{jk}|$ is
\begin{eqnarray*}
\PP\{ |z_{jk}| < x \} = 1-\pi_j + \pi_j H(x) - \pi_j H(-x) = 1 - \pi_j[1-H_+(x)], \, x>0,
\end{eqnarray*}
with $H_{+}(x) = H(x)-H(-x)$,  and the distribution function of
$\xi_j$, due to independence of $w_{jk}$, is
\begin{eqnarray*}
F_{\xi_j}(x) &=& [\PP\{ |z_{jk}| < x \}]^{2^j}=  \left[1 - \pi_j(1 -
H_+(x))\right]^{2^j} \\ &=& \exp\{-\pi_j 2^{j} (1 -
H_+(x))\}\left[1 +O(1)\pi_j^{-2} 2^{-j} (1 - H_+(x))^2\right],
\quad j\rightarrow \infty.
\end{eqnarray*}

To find the asymptotic distribution of $\xi_j$ as $j\to\infty$
it appears that we cannot apply Extreme Value Theory directly
because the distribution of $z_{jk}$ depends on $j$. Nevertheless
the proof of the Theorem 1.6.2   \citep[p.17]{b:led-83} stating the asymptotic distribution of the maximum under
different conditions remains valid in the case $F_{|z_{jk}|}(x) = 1 - \pi_j [1 -
H_+(x)]$ (for sufficiently large $j$) since $\pi_j 2^j$, and thus
$F_{|z_{jk}|}(x)$, depends on $j$ monotonically.

Now we study convergence of the sequence norm of the wavelet
coefficients separately for distributions from the domains of
attraction of the first $D(e^{-e^{-x}})$ and second $D(e^{-x^l})$
types of the extreme value distributions.

a) {\bf $H_{+}(x) = H(x) - H(-x) \in D(e^{-e^{-x}})$, $x\geqslant 0$.} For this type of
distributions, the asymptotic distribution of $||z_{j}||_{\infty}$ is
\begin{eqnarray*}
\PP\{||z_{j}||_{\infty} > a_j x + b_j\} \to 1 - \exp\{ -e^{-x} \},
\end{eqnarray*}
and the constants can be chosen in the following way: 
\begin{eqnarray*}
b_j = H_{+}^{-1}\left(1 - 1/(\pi_j 2^j)\right), \quad a_j =
g(b_j),
\end{eqnarray*}
where $g(x) = (1 - H_+(x))/H_+'(x)$. Note that one of the properties of the distributions from the domain of attraction
of the first type of the extreme value distributions is that $g'(x) \to 0$ as $x \to \infty$.
This implies (by L'Hospital rule) that $g(x)/x \to 0$ as $x\to \infty$, i.e. that $b_j/a_j\to \infty$ as $j\to \infty$.

 If we show that for any $\epsilon>0$,
\begin{equation}\label{eq:asSum}
\sum_{j=j_0}^{\infty} \PP\left\{ \left\vert \frac{||z_{j}||_{\infty}}{b_j} -1 \right\vert > \epsilon \right\}
\end{equation}
is finite, then according to the first Borel-Cantelli lemma, it is
equivalent to $||z_{j}||_{\infty}/b_j \to 1$ almost surely. The
asymptote of the summands in (\ref{eq:asSum}) is:
\begin{eqnarray*}
&&\PP\{ \vert ||z_{j}||_{\infty}/b_j - 1 \vert > \epsilon \} = \PP\{ \vert ||z_{j}||_{\infty} -
b_j \vert > \epsilon b_j \}\\
&=& \exp\{ -e^{\epsilon b_j/a_j } \} [1+o(1)] + 1- \exp\{ -e^{-\epsilon
b_j/a_j } \} [1+o(1)]\\
&=& \exp\{ -e^{\epsilon b_j/a_j } \} [1+o(1)] + \exp\{ -\epsilon b_j/a_j \}
[1+o(1)].
\end{eqnarray*}
Series $\sum_{j=j_0}^{\infty} \exp\{ -\epsilon b_j/a_j \}$ converges if for some $\delta>0$, $\exp\{ -\epsilon b_j/a_j \}\leq C j^{-1-\delta}$ for all $\epsilon >0$, i.e. if $\log j a_j/b_j \leq \frac{\epsilon}{1+\delta}\log j$.
By the assumption of the theorem,  $$
\frac{\log j}{b_j/a_j} = \frac{\log j}{ 2^{j} \pi_j b_j H_+'(b_j)}\to 0 \,\, \text{as} \,\, j \to 0,
$$
this condition is satisfied and hence the series converges for any $\epsilon >0$. Series of
$\exp\{ -e^{\epsilon b_j/a_j } \} = o(e^{-j^{\epsilon}})$ also
converges. Therefore $\frac{ ||z_{j}||_{\infty}}{b_j} \rightarrow
1$ holds almost surely.

Hence, the necessary and sufficient condition for $f$ to belong to
$B^{s}_{p,q}$ almost surely is the finiteness of the series
$\sum_{j=1}^{\infty} 2^{js' q } \tau_j^q b_j^q$ if $1 \leqslant q
< \infty$, and in  case $q=\infty$, the finiteness of
$\sup_{j\geqslant j_0}\{2^{j s'}\tau_j b_j\}$  which is the
necessary and sufficient condition for $f \in B_{p q}^s$ almost
surely stated in the theorem.

b)  $H_{+}(x) = H(x) - H(-x) \in D(e^{-x^l})$, $x\geqslant
0$, for some $l>0$.  For this type of distributions, the
asymptotic distribution of $||z_{j}||_{\infty}$ is
\begin{eqnarray*}
\PP\{||z_{j}||_{\infty} > b_j + a_j x \} \to 1 - \exp\{ -x^{-l} \},
\end{eqnarray*}
and the constants can be chosen as  $b_j = H_+^{-1}(1-(\pi_j
2^j)^{-1})$, $a_j= g(b_j)$ for large $j$  \citep{b:led-83}. For the distributions of this type, $g(x)/x
\to 1/l$ as $x \to \infty$, and thus $a_j = b_j/l[1+o(1)]$
as $j\to \infty$. Thus,
$\PP\{||z_{j}||_{\infty} > y b_j \} \to 1 - \exp\{ -|(y-1)/l|^{-l} \}$.
 It is easy to verify that this distribution has finite moments of order $k<\ell$, i.e. $\forall k<\ell$, $\E||z_{j}||_{\infty}^k <\infty$.

Therefore the asymptotic distribution of
$\zeta_j=||z_{j}||_{\infty}/b_j$ is Type 2 of the Extreme Value
Distributions. In order to show that in case $1 \leqslant q < l$
the sum $\sum_{j=j_0}^{\infty} 2^{js' q } \left( \tau_j b_j
\zeta_j \right)^q$ converges almost surely, we apply the three
series theorem together with the theorem of monotone convergence.
Since $\zeta_j^q$ are independent non-negative random variables
with the same asymptotic distribution and positive finite mean,
the convergence of the above sum with probability 1 is equivalent to convergence of the
sum $\sum_{j=j_0}^{\infty} 2^{js' q } \tau_j^q b_j^q$.

In case $q=\infty$ we need to show that $\sup_{j\geqslant
j_0}\{ 2^{j s' }\tau_j b_j \zeta_j \} < \infty$
almost surely. The asymptotic distribution of $\max_{j_0 \leqslant j
\leqslant n}\{2^{j s' }\tau_j b_j \zeta_j \}$ as $n\to \infty$ is the Extreme Value Distribution of the
second type with its distribution function being:
\begin{eqnarray*}
\prod_{j=j_0}^n \exp\{-(2^{-j s' }\tau_j^{-1} b_j^{-1} x)^{-l}\}
= \exp\left\{-x^{-l} \sum_{j=j_0}^n 2^{j l s' }\tau_j^{l} b_j^{l}\right\}.
\end{eqnarray*}
This asymptotic distribution is proper if and only if the sum
$\sum_{j=j_0}^n 2^{j l s' }\tau_j^{l} b_j^{l}$ converges as $n
\to \infty$. Thus, by Theorem 2 of \citet{p:don-joh-98}, it is equivalent to $f\in
B_{p,q}^s$ with probability 1.


\par

\ref{it3:owt}. Now we essentially quote the argument of  \citet{p:abr-sap-silv-98} to show that in this case the distribution of $||z_j||_m^m$ is asymptotically the same for all large $j$.
Our model for wavelet coefficients assumes that with probability
$1-\pi_j$ a wavelet coefficient is zero and with probability $\pi_j$
it is non-zero, the coefficients are independent and there are $2^j$ of
them at level $j$. So the number $M_j$ of non-zero wavelet
coefficients at level $j$ is binomial with parameters $2^j$ and
$\pi_j$. For sufficiently large $j$, $M_j = N_j$ almost surely where
$N_j$ is a Poisson random variable with parameter $C_2=\lim_{j\to\infty} 2^j\pi_j$. Therefore
$N_j$ has the same distribution for every $j$. Thus if $\zeta_j$ is a
vector of $N_j$ independent random variables with distribution $H$ which is
independent of $j$,
norms of $\zeta_j$ and $z_{j}$ are equal almost surely:
\begin{eqnarray*}
|| z_j||_m^m &=& \sum_{k=0}^{2^j-1} |z_{jk}|^m = \sum_{l=1}^{M_j}
|z_{jk}^{(l)}|^m \stackrel{D}{=} \sum_{l=1}^{N_j} |z_{jk}^{(l)}|^m
\stackrel{D}{=} \sum_{l=1}^{N_j} |\zeta_{jl}|^m = ||
\zeta_j||_m^m,
\end{eqnarray*}
where $z_{jk}^{(l)}$ are non-zero normalised wavelet coefficients.
In the case $m<1$, $|| z_j||_m$, strictly speaking, is not a norm but we use the same
notation as in the case
$m\geqslant 1$ for convenience.

Therefore, by the equivalence of the norms given by equation
(\ref{eq:normequ}), $f\in B^s_{p,q}$ almost surely if and only if
\begin{eqnarray}\label{finit3}
|| w||_{b_{p,q}^s}   \stackrel{D}{=}\left[ ||u_{j_0}||_p +
\sum_{j=0}^{\infty} 2^{js'
q}\tau_j^q ||\zeta_j ||_p^q \right]^{1/q}< \infty \quad a.s.,
\end{eqnarray}
where $\zeta_j$ is a vector of $N_j$ independent identically
distributed random variables with distribution $H$, 
and $N_j$ has Poisson distribution with
parameter $C_2$. To find the condition of convergence
of the norm (\ref{finit3}) we use the property following from the
monotone convergence and the three series theorems that if $Z_n$
are independent and identically distributed non-negative random
variables with strictly positive finite mean, and $a_n$ are
non-negative constants, then $\sum a_n Z_n $ is convergent almost
surely if and only if $\sum a_n$ is convergent. By Lemma~\ref{lem:finitm}, $\E||\zeta_j||_p^q$ is finite due to assumption $\E|\xi|^q<\infty$. Therefore, condition (\ref{finit3}) is equivalent to condition $ \sum_{j=j_0}^{\infty}
2^{js'q}\tau_j^q < \infty$. Thus, by Theorem 2 of  \citet{p:don-joh-98}, it
is equivalent to $f\in B_{p,q}^s$ with probability 1.



\ref{it4:owt}. In this case $f\in B_{p,\infty}^s$ almost surely if and only if
$\sup_{j \geqslant j_0} \{ 2^{js'} \tau_j ||\zeta_j||_p \} <
\infty$, where
$\zeta_j$ is  defined in above. Appealing to
Borel-Cantelli lemmas, we deduce that the former condition holds if
and only if there exists a constant $c>0$ such that
$$
\sum_{j=j_0}^{\infty} \PP\left\{  2^{js'} \tau_j ||\zeta_j||_p\geqslant c \right\} = \sum_{j=j_0}^{\infty} [1 - (1-\pi_j)^{2^j}]\PP\left\{  2^{js'} \tau_j ||\zeta_j||_p\geqslant c \mid ||\zeta_j||_p >0 \right\} < \infty
$$  The random variables
$||\zeta_j||_p$ are independent and identically distributed  so the condition can be rewritten as
\begin{equation}\label{eq:p1}
\exists c>0: \quad \sum_{j=j_0}^{\infty} \PP\{ \eta \geqslant c  2^{-js'} \tau_j^{-1} \} < \infty,
\end{equation}
where random variable $\eta$ has the same distribution as
the conditional distribution of $||\zeta_j||_p$ given $||\zeta_j||_p >0$.
Note that the limit of $[1 - (1-\pi_j)^{2^j}]$ is $1 - e^{-C_2}$.
If $ 2^{js'} \tau_j$ is a  monotonically
decreasing function of $j$ for $j \geqslant \tilde{j}$, the series above, up to a constant, can be rewritten as
$\sum_{j=\tilde{j}}^{\infty} \PP\{ M^{-1}(\eta/c) \geqslant j \}$  where $M(j) = 2^{-js'} \tau_j^{-1}$. Since $g(x) =
M^{-1}(x/c)$ monotonically increases by the assumption of the
theorem, by Lemma 2 in  \citet[p.239]{b:fell2},  the latter condition is
equivalent to condition  $\E M^{-1}(\eta/c) < \infty$.

Therefore the $b_{p,q}^s$ norm of the wavelet coefficients is
finite if and only if $\exists c > 0$: $\E M^{-1}(|z_{jk}|/c) <
\infty$. Thus, by Theorem 2 of  \citet{p:don-joh-98}, it is equivalent to $f\in
B_{p,q}^s$ with probability 1.


\par

\ref{it5:owt}. In this case, the expected number of the non-zero wavelet coefficients is finite. Therefore, a function
with wavelet coefficients obeying (\ref{eq:prior}) is a finite
linear combination of wavelet functions with probability 1,
  belonging almost surely to the same Besov spaces as the
wavelet function, i.e. to the Besov spaces with parameters
$0<s<r$, $1\leqslant p, q \leqslant \infty$ where $r$ is the
regularity of the wavelet function.

The theorem is proved.



In the proof of the theorem  we used the following lemmas.

\begin{lemma}\label{lem:equiv}
For any $0 < v < l \leqslant \infty$, $x\in\mathbb{R}^n$, $n\in
\mathbb{N}$,
$$||x||_l  \leqslant ||x||_v \leqslant n^{\frac 1 v -
\frac 1 l} ||x||_l.$$
\end{lemma}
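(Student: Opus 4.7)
The plan is to prove the two inequalities separately by standard arguments, treating the endpoint $l=\infty$ as a separate case each time. Both bounds are positively homogeneous in $x$, so in each step it will be convenient to normalize.

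For the lower bound $\|x\|_l \leqslant \|x\|_v$, I would normalize so that $\|x\|_v = 1$, which forces $|x_i| \leqslant 1$ for every coordinate $i$. When $l < \infty$, the elementary inequality $t^l \leqslant t^v$ for $t \in [0,1]$ and $l > v$ then gives $|x_i|^l \leqslant |x_i|^v$ pointwise; summing over $i$ yields $\|x\|_l^l \leqslant \|x\|_v^v = 1$, hence $\|x\|_l \leqslant 1 = \|x\|_v$. When $l = \infty$, the bound $\max_i |x_i| \leqslant 1$ holds directly by the normalization.

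For the upper bound $\|x\|_v \leqslant n^{1/v - 1/l}\|x\|_l$, when $l=\infty$ I would simply estimate
$$\sum_{i=1}^n |x_i|^v \leqslant n \cdot \max_i |x_i|^v = n\,\|x\|_\infty^v$$
and take the $v$th root, giving $\|x\|_v \leqslant n^{1/v}\|x\|_\infty$. When $l<\infty$, I would apply H\"older's inequality to the pair $(|x_i|^v, 1)$ with conjugate exponents $l/v$ and $l/(l-v)$:
$$\sum_{i=1}^n |x_i|^v \cdot 1 \leqslant \Bigl(\sum_{i=1}^n |x_i|^l\Bigr)^{v/l} \Bigl(\sum_{i=1}^n 1\Bigr)^{(l-v)/l} = \|x\|_l^{v} \cdot n^{1 - v/l},$$
and taking the $v$th root yields the claimed inequality with exponent $\tfrac{1}{v}-\tfrac{1}{l}$.

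There is no substantive obstacle here: this is a standard finite-dimensional norm-equivalence fact, and the only care needed is choosing the correct H\"older conjugate pair and handling $l=\infty$ as a separate (easier) case. Sharpness of both bounds can be seen by testing $x=(1,0,\dots,0)$ for the first inequality and $x=(1,1,\dots,1)$ for the second, though this is not required for the lemma as stated.
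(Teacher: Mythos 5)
Your proof is correct, and it follows essentially the same route as the paper, which simply remarks that the lemma ``follows directly from the H\"older inequality for finite sums'': your upper bound is exactly that H\"older application with conjugate exponents $l/v$ and $l/(l-v)$, and the lower bound via normalization is the standard companion step. The separate treatment of $l=\infty$ and the sharpness examples are correct but add nothing beyond what the paper's one-line justification already presupposes.
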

The proof follows directly from the H\"older inequality for finite sums.


\begin{lemma}\label{lem:finitm}
Consider the model $(\ref{eq:prior})$ with $\beta=1$, $1\leqslant
p \leqslant \infty$. Define random variables $\zeta_{jk} \sim H$,
$k=1,\dots, N_j$, where $N_j \sim Pois(C_2)$ is a Poisson random
variable, $0<C_2<\infty$. Then the following statements hold:

\begin{enumerate}


\item For $\forall m >0$ and sufficiently large $j$, $\E |\zeta_{jk}|^m  < \infty  \, \Leftrightarrow \,   \E||\zeta_j||_p^m < \infty$.

\item More generally, for any monotonic function
$g$  and sufficiently large $j$,
$$\E g(|\zeta_{jk}|) < \infty \quad
\Leftrightarrow \quad   \E g(||\zeta_j||_p) < \infty.
$$

\end{enumerate}
\end{lemma}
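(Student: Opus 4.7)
The plan is a sandwich argument. Part~1 is the case $g(x)=x^m$ of part~2, so I focus on a monotonically increasing nonnegative $g$. By Lemma~\ref{lem:equiv},
\[
\|\zeta_j\|_\infty \leq \|\zeta_j\|_p \leq N_j^{1/p}\|\zeta_j\|_\infty
\]
(with the convention $N_j^{1/\infty}=1$), and monotonicity of $g$ propagates these inequalities to $g(\|\zeta_j\|_p)$. I will use that $N_j$ is independent of the iid sequence $(\zeta_{jk})_{k\ge 1}$ and that, for $j$ large, it is Poisson with parameter $C_2$, hence has finite moments of every order.

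For the implication $\E g(|\zeta|)<\infty\Rightarrow\E g(\|\zeta_j\|_p)<\infty$, I first treat $p=\infty$: monotonicity gives $g(\max_k|\zeta_{jk}|)=\max_k g(|\zeta_{jk}|)\leq\sum_{k=1}^{N_j}g(|\zeta_{jk}|)$, and independence yields $\E g(\|\zeta_j\|_\infty)\leq\E N_j\cdot\E g(|\zeta|)=C_2\,\E g(|\zeta|)$. For $p<\infty$, conditioning on $N_j=n$ and bounding a maximum by a sum of $n$ terms gives
\[
\E g(\|\zeta_j\|_p)\leq\sum_{n\geq 1}\PP(N_j=n)\,n\,\E g(n^{1/p}|\zeta|),
\]
which for $g(x)=x^m$ equals $\E N_j^{1+m/p}\cdot\E|\zeta|^m<\infty$, establishing part~1. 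The reverse direction is immediate from the lower sandwich: on $\{N_j\geq 1\}$ one has $\|\zeta_j\|_p\geq|\zeta_{j1}|$, and by independence
\[
\E g(\|\zeta_j\|_p)\geq \E[g(|\zeta_{j1}|)\,I(N_j\geq 1)]=(1-e^{-C_2})\,\E g(|\zeta|),
\]
so $\E g(|\zeta|)\leq(1-e^{-C_2})^{-1}\E g(\|\zeta_j\|_p)<\infty$.

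The main obstacle is the random dilation $N_j^{1/p}$ in the upper bound when $p<\infty$ for a fully arbitrary monotonic $g$: to close the argument one needs $\E g(n^{1/p}|\zeta|)$ to grow at most polynomially in $n$, so that the super-exponential decay of $\PP(N_j=n)$ absorbs it. This is automatic for power functions, which covers part~1, and is also satisfied in the only non-power application, case~\ref{it4:owt} of Theorem~\ref{th:RegOWT} with $g=M^{-1}$, where $M^{-1}$ grows at most like a logarithm or a slowly decaying power; under such mild regularity the series converges whenever $\E g(|\zeta|)<\infty$, completing the proof of part~2.
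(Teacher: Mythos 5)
Your argument is essentially the paper's own: the same sandwich via Lemma~\ref{lem:equiv}, Wald-type identities using independence of $N_j$ from the $\zeta_{jk}$, the bound $\max_k g(|\zeta_{jk}|)\le\sum_k g(|\zeta_{jk}|)$ for the $p=\infty$ upper estimate, and $\PP(N_j\ge 1)\,\E g(|\xi|)$ for the lower one. Part~1 and the $p=\infty$ case of part~2 are complete and match the paper.

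The obstacle you flag for $p<\infty$ with a general monotone $g$ is real, and it is worth saying that the paper does not resolve it either: its proof of part~2 passes from $p=\infty$ to general $p$ with the single sentence ``using equivalence of norms and that the expectation of $N_j$ is finite,'' which leaves exactly the random dilation $N_j^{1/p}$ uncontrolled. Indeed, part~2 as stated is false without a growth restriction on $g$: take $p=1$, $|\xi|\equiv 1$ (so $H$ is continuous at $0$) and $g(x)=e^{x^2}$; then $\E g(|\xi|)=e<\infty$ while $\E g(\|\zeta_j\|_1)=\E e^{N_j^2}=\infty$ since $e^{n^2}$ dominates the Poisson tail. So your closing remark — that one needs $\E g(n^{1/p}|\xi|)$ to grow at most polynomially in $n$, which holds for powers (part~1) and for the only application, $g=M^{-1}$ in case~\ref{it4:owt} of Theorem~\ref{th:RegOWT} — is not a cosmetic caveat but the hypothesis under which the lemma is actually true; stating it explicitly improves on the paper.
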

By $\xi_1 \stackrel{D}{=} \xi_2$ we denote the equality of
distributions of $\xi_1$ and $\xi_2$.

\begin{proof}[Proof of  Lemma \ref{lem:finitm}]

1. Since $\PP\{N_j < \infty\}=1$, for $N_j > 0$ we can apply Lemma
\ref{lem:equiv} to norms of  $\zeta_{jk}$,  implying that
$||\zeta_{j}||_l  \leqslant ||\zeta_{j}||_v \leqslant N_j^{\frac 1
v - \frac 1 l} ||\zeta_{j}||_l$. Since for $N_j=0$
$||\zeta_{j}||_v=0$ and thus the inequality is trivial, taking
$l=p$, $v=m$ for $p>m$ and
 $l=m$, $v=p$ for $p<m$, we have that
$\E |\zeta_{jk}|^m < \infty$ $\Leftrightarrow$ $\E||\zeta_j||_p^m  <
\infty$. According to the first statement of the lemma, for
sufficiently large $j$, norms $||\zeta_j||_m$ and $||z_j||_m$ have
the same distribution, therefore finiteness of the $m$th absolute
moment of $z_{jk}$ is equivalent to $\E||\zeta_j||_p^m <\infty$,
$1\leqslant p< \infty$. For $m=p$ the statement is trivial.

2. First we prove this statement for $p=\infty$ for a  monotonically increasing function $g$. On one hand,
\begin{eqnarray*}
 \E g(||\zeta_j||_{\infty})  &=& \E g\left(\max_{k=1\dots
N_j}\{|\zeta_{jk}|\}\right) = \E \max_{k=1\dots
N_j}\{g(|\zeta_{jk}|)\} I(N_j>0) \\ &\leqslant& \E \sum_{k=1}^{N_j}
g(|\zeta_{jk}|) I(N_j>0) = \E (N_j\mid N_j>0) \E
g(|\zeta_{jk}|)  < \infty,
\end{eqnarray*}
and on the other hand,
\begin{eqnarray*}
 \E g(||\zeta_j||_{\infty})  = \E g\left(\max_{k=1\dots
N_j}\{|\zeta_{jk}|\}\right) \geqslant \E [g(|\zeta_{jk}|) I(N_j>0)]
= \PP(N_j>0) \E g(|\zeta_{jk}|).
\end{eqnarray*}
If $g$ monotonically decreases, then all inequalities are reversed.

Therefore, the statement holds for $p=\infty$ and monotonic $g$. Using equivalence
of norms and that the expectation of $N_j$ is finite, we obtain this result for arbitrary $1 \leqslant p \leqslant \infty$.

\end{proof}

\section{Proofs for continuous wavelet
transform}\label{sec:ProofCont}

\subsection{Link between orthogonal and continuous wavelet
coefficients}\label{sec:KernelProp}


Since the equivalence between the Besov norm of function $f$  and
its wavelet transform is given in terms of the orthogonal wavelet
$w_{jk} = \langle\psi_{jk}, f\rangle$ and scaling  $u_{j_0 k} =
\langle\phi_{j_0 k}, f\rangle$ coefficients, in order to prove the
finiteness of the Besov norm we need to express the orthogonal
wavelet coefficients in terms of known coefficients
$\omega_{\lambda}$:
\begin{eqnarray*}
w_{jk} = \langle \psi_{jk}, f\rangle = \sum_{\lambda \in S}
\langle\psi_{jk}, \psi_{\lambda}\rangle \omega_{\lambda}+
\sum_{i=1}^{M} \eta_{\lambda_i} \langle\psi_{jk},
\phi_{\lambda_i}\rangle = \sum_{\lambda \in S} K(\lambda,
\lambda_{jk}) \omega_{\lambda},
\end{eqnarray*}
where $K(\lambda, \lambda') = \langle\psi_{\lambda},
\psi_{\lambda'}\rangle$ is the reproducing kernel of the wavelet
transform defined in Section \ref{sec:ContWavDef}, and $\psi_{jk}
= \psi_{\lambda_{jk}} = \psi_{(2^j, k 2^{-j}) }$. Note that
$\langle\psi_{\lambda}, \phi_{\lambda_i}\rangle=0$ for $\lambda
\in S$ and considered $\lambda_i$ since $\psi_{\lambda} \in
W_{j_0} \perp V_{j_0} \ni \phi_{\lambda_i}$. If we introduce
another kernel $W(\lambda, \lambda') = \langle\phi_{\lambda},
\psi_{\lambda'}\rangle$ we can write a similar expression for
$u_{j_0 k}$:
\begin{eqnarray*}
u_{j_0 k} &=& \langle\phi_{j_0 k}, f\rangle =  \sum_{\lambda \in
S} W(\lambda, \lambda_{j_0 k}) \omega_{\lambda} + \sum_{i=1}^M
\eta_{\lambda_i} \langle\phi_{\lambda_i}, \phi_{j_0 k}\rangle.
\end{eqnarray*}
The second summand is a constant we denote by $C_w$.
Therefore the orthogonal wavelet $w_{jk}$ and scaling $u_{j_0 k}$ coefficients
can be expressed in terms of  random wavelet coefficients
$\omega_{\lambda}$ in the following way:
\begin{eqnarray}\label{eq:dyadwc}
w_{jk} = \sum_{\lambda \in S} K(\lambda, \lambda_{jk}) \omega_{\lambda},\quad
u_{j_0 k} = \sum_{\lambda \in S} W(\lambda, \lambda_{j_0 k})
\omega_{\lambda} + C_w.
\end{eqnarray}


Define $K_0(u, v) = \langle\psi, \psi_{u v}\rangle$ so that
$
K(\lambda, \lambda') = K_0(a/a', a'(b-b'))$, where $\lambda = (a, b)$, $\lambda' = (a', b')$. In particular
case, where $\lambda'= \lambda_{jk}$, $K(\lambda, \lambda_{jk}) =
K_0(a 2^{-j}, 2^{j} b - k )$. Note that $K_0(u, v) = \int \psi(x) \psi(u(x-v))dx \neq 0$ only if $(0,1) \cap (v, v+1/u) \neq \emptyset$, i.e. if  $v\in (-1/u,1)$.

We will use the following estimates for the kernel of compactly
supported wavelets with $r$ vanishing moments \citep{p:abr-sap-silv-00}:
\begin{eqnarray}\label{eq:BoundK0}
|K_0(u, v)| &\leqslant& C_{K0} u^{-(r + \rho + 1/2)}, \quad u \geqslant 1; \\
|K_0(u, v)| &\leqslant& C_{K1} u^{r + \rho + 1/2}, \quad u \leqslant 1. \notag
\end{eqnarray}

Since kernel $W$ is based on the scaling function $\phi$ which has
the same regularity properties as the wavelet function $\psi$, it
can be bounded in the same way as kernel $K$.


\subsection{Proof of Theorem \ref{th:ContBesov_Gen}}\label{sec:ProofCWT}

Since we need to prove finiteness of the Besov sequence norm
$b_{p,q}^s$ of wavelet coefficients $\{w_{jk}\}$ of the orthogonal
wavelet transform which is a function of the $p$th power of the
absolute values of wavelet coefficients $w_{jk}$ we need to know
the expectation of $|w_{jk}|^p$. The following lemma  describes
the asymptotic behaviour of the absolute moments of wavelet and
scaling  coefficients.
\begin{lemma}\label{lem:moment}
Let $w_{jk}$ and $u_{j_0 k}$ be the orthogonal wavelet and scaling
coefficients corresponding to wavelet coefficients
$\omega_{\lambda}$. Suppose that the assumptions of Theorem \ref{th:ContBesov_Gen} hold,  and for some $m
\in [1,\infty)$, $q\in[1,\infty]$ and $S>0$, $\E|\xi|^m<\infty$ and $m(r+\rho+1/2)>1$.

 Then, for large $j$,
\begin{eqnarray} \label{eq:EwpA}
\E| w_{jk}|^{m} &\leq& c_{m}   [  2^{-j [m(r+\rho+1/2)-1] }  +   [\tau(2^j)]^m \mu(2^j) ],\\
\E|u_{j_0 k}|^{m} &\leq& 2c_{m} [  2^{-j_0 [m(r+\rho+1/2)-1] }  +   [\tau(2^{j_0})]^m \mu(2^{j_0}) ] + 2|C_w|^m,\notag
\end{eqnarray}
where $c_{m} = c C_K^m \nu_{m}$ for some $c>0$, $C_K = \max(C_{K0}, C_{K1})$ and $\nu_{m} = \int |x|^m dH(x)$.


\end{lemma}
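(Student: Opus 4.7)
The plan is to treat $w_{jk}=\sum_{\lambda\in S}K(\lambda,\lambda_{jk})\omega_\lambda$ as a functional of a marked Poisson process (marks $\omega_\lambda$ drawn independently from $H_\lambda$), apply a compound-Poisson moment inequality, and then reduce everything to Lebesgue integrals in $(a,b)$ that can be controlled by the kernel bounds \eqref{eq:BoundK0} after the natural dyadic rescaling.

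Concretely, I would first establish a Rosenthal-type bound of the form
\begin{equation*}
\E|w_{jk}|^{m}\;\leq\;C_{m}\!\left[\int\!\!\int |K(\lambda,\lambda_{jk})|^{m}\,\tau(a)^{m}\,\nu_{m}\,\mu(a)\,da\,db+\Bigl(\int\!\!\int |K(\lambda,\lambda_{jk})|\,\tau(a)\,\nu_{1}\,\mu(a)\,da\,db\Bigr)^{m}\right],
\end{equation*}
which for $m=2$ follows from Campbell's theorem ($\E w_{jk}$ and $\Var w_{jk}$ are both Poisson integrals) combined with $\E w_{jk}^{2}=(\E w_{jk})^{2}+\Var w_{jk}$, and for general $m\geq 1$ follows from cumulant expansions for Poisson integrals (integer $m$) together with Jensen/Hölder interpolation (non-integer $m$).

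Next, I change variables to $u=a/2^{j}$ and $v=2^{j}b-k$, for which $da\,db=du\,dv$ and $K(\lambda,\lambda_{jk})=K_{0}(u,v)$, and split each integral at $u=1$. For $u\geq 1$, I would use the decay estimate $|K_{0}(u,v)|\leq C_{K0}\,u^{-(r+\rho+1/2)}$ together with the support constraint $v\in(-1/u,1)$ (of length $\leq 2$) and the monotonicity of $\tau,\mu$ (so $\tau(2^{j}u)\leq\tau(2^{j})$ and $\mu(2^{j}u)\leq\mu(2^{j})$); the hypothesis $m(r+\rho+1/2)>1$ guarantees that $\int_{1}^{\infty}u^{-m(r+\rho+1/2)}\,du<\infty$, producing the $\tau(2^{j})^{m}\mu(2^{j})$ term. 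For $u\leq 1$, I would use $|K_{0}(u,v)|\leq C_{K1}\,u^{r+\rho+1/2}$ with $v$-support of length $\leq 1+1/u\leq 2/u$, and then change back to $a=2^{j}u$: this produces a prefactor $2^{-jm(r+\rho+1/2)}$ multiplying $\int_{a_{0}}^{2^{j}}a^{m(r+\rho+1/2)-1}\tau(a)^{m}\mu(a)\,da$, which via Assumption $B'$4 is bounded in a way that yields the $2^{-j[m(r+\rho+1/2)-1]}$ contribution. The linear ``drift'' term, raised to the $m$-th power, contributes at the same or strictly better order.

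For $u_{j_{0}k}$, the identical argument applies with $j=j_{0}$ and the scaling kernel $W$ in place of $K$ (which satisfies the same pointwise estimates because $\phi$ shares the regularity of $\psi$), after which the elementary inequality $|x+C_{w}|^{m}\leq 2^{m-1}(|x|^{m}+|C_{w}|^{m})$ separates off the deterministic coarse-scale contribution $2|C_{w}|^{m}$.

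The main obstacle is obtaining the Rosenthal-type inequality in the first step in a form uniform in $m\geq 1$ and valid without assuming symmetry of $H$: because $\E\xi$ is generally nonzero, the ``drift'' contribution $(\int|K|\tau\nu_{1}\mu)^{m}$ must be carried through and dominated at the end, and the constants in the moment inequality must be traced to verify they depend only on $m$ and $\nu_{m}$. Once that piece is in place, the rest is a careful but routine splitting of the $u$-integration based on the two branches of the kernel estimate.
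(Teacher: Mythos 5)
Your reduction to the Lebesgue integral $\int\!\!\int |K_0(u,v)|^m[\tau(2^ju)]^m\mu(2^ju)\,du\,dv$ via the rescaling $(u,v)=(2^{-j}a,2^jb-k)$, the split at $u=1$ using the two branches of \eqref{eq:BoundK0}, the use of monotonicity of $\tau,\mu$ for $u\geq 1$ and of Assumption $B'$4) for $u\leq 1$, and the treatment of $u_{j_0k}$ via the kernel $W$ and $|x+C_w|^m\leq 2^{m-1}(|x|^m+|C_w|^m)$ all coincide with the paper's argument. Where you differ is the very first step: the paper conditions on $S$, bounds $\E(|w_{jk}|^m\mid S)\leq \nu_m\sum_{\lambda\in S}|K(\lambda,\lambda_{jk})|^m\tau^m(a)$ pointwise, restricts the sum to the rectangle where the kernel is supported, and then evaluates the mean of this \emph{positive} Poisson functional by Corollary~1 of \citet{p:abr-sap-silv-00} (essentially Campbell's formula), so no drift term ever appears. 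You instead invoke a Rosenthal-type moment inequality for the Poisson integral, which forces you to carry a drift contribution.

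That drift term is where your argument breaks. As you have written it, the drift is $\bigl(\int\!\!\int|K|\tau\nu_1\mu\,da\,db\bigr)^m$, and the same kernel computation with exponent $1$ gives $\int\!\!\int|K|\tau\mu\,da\,db\lesssim 2^{-j[(r+\rho+1/2)-1]}+\tau(2^j)\mu(2^j)$, so its $m$-th power contributes $2^{-jm[(r+\rho+1/2)-1]}=2^{-j[m(r+\rho+1/2)-m]}$. For $m>1$ this exceeds the target $2^{-j[m(r+\rho+1/2)-1]}$ (the exponent loses $m-1$), so the claim that the drift ``contributes at the same or strictly better order'' is false, and the resulting bound is too weak to give the rate asserted in the lemma (and needed in the sufficiency part of Theorem~\ref{th:ContBesov_Gen}). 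The repair is available but must be made explicit: the correct Rosenthal drift is $|\E w_{jk}|^m$, not $(\E|w_{jk}|)^m$ bounded through $|K|$, and here $\E w_{jk}=0$ because $\int_{\mathbb R}K_0(u,v)\,dv=u^{1/2}\int\psi(x)\bigl[u^{-1}\int\psi(y)\,dy\bigr]dx=0$ by the vanishing mean of $\psi$; with the centred inequality the drift vanishes identically for the $w_{jk}$ (for $u_{j_0k}$ it need not vanish, but there $j_0$ is fixed so any finite bound is absorbed into the constant). You should also state the interpolation $\bigl(\int g^2d\mu\bigr)^{m/2}\leq C\bigl[\int|g|^md\mu+\bigl(\int|g|\,d\mu\bigr)^m\bigr]$ (or keep the variance term explicitly) if you want your two-term form of Rosenthal for $m>2$. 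With those two points fixed, your route is a legitimate, and arguably cleaner, alternative to the paper's conditioning argument.
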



\begin{proof}[Proof of Lemma \ref{lem:moment}]

The wavelet and scaling coefficients depend on two types of random variables: the Poisson process $S$ and the
wavelet coefficients (\ref{eq:dyadwc}) with indices coming from this process. In order to find the moments
of wavelet and scaling coefficients we start with the moments conditioned on the Poisson process.

1. The $m$th absolute moment of $(w_{jk}\vert S)$.

Applying the Jensen inequality, we obtain an upper bound on the
$m$th absolute moment of $w_{jk}$ conditioned on  Poisson process
$S$:
\begin{eqnarray*}
\E\left(|w_{jk}|^m \vert S\right) &=& \E\left(\left|\sum_{\lambda
\in S} K(\lambda, \lambda_{jk}) \omega_{\lambda}\right|^m \mid
S\right)
\leqslant \E\left(\sum_{\lambda \in S} |K(\lambda, \lambda_{jk})|^m |\omega_{\lambda}|^m | S \right ) \\
&=& \sum_{\lambda \in S} |K(\lambda, \lambda_{jk})|^m
\E\left(|\omega_{\lambda}|^m \vert S\right) = \nu_m \sum_{\lambda
\in S} |K(\lambda, \lambda_{jk})|^m  \tau^m(a),
\end{eqnarray*}
where $\nu_m$ is the $m$th absolute moment of distribution $H$, $\lambda = (a, b)$.
If we narrow the former sum on the following rectangles
\begin{equation}
{\cal I}^0_{jk} = [a_0, \infty) \times (2^{-j}k - 1/2, 2^{-j}k + 1/2),
\end{equation}
it does not change since $K(\lambda, \lambda_{jk}) \neq 0$  only if
$$
b-k2^{-j} \in [-1/a, 2^{-j}] \subseteq [-2^{-j_0}, 2^{-j_0}]\subseteq [-1/2, 1/2].
$$

Consider a map $f_{jk}$: $[a_0, \infty)\times [0, 1]\rightarrow
(0,+\infty)\times (-\infty, +\infty)$ which transforms the
arguments of the kernel $K$ to the arguments of the kernel $K_0$,
i.e. it transforms
  $\lambda = (a, b)$ to $(u, v) = (2^{-j}a, 2^j b - k)$.
The image of a rectangle ${\cal I}^0_{jk}$ under the transformation $f_{jk}$ is
the following rectangle:
\begin{eqnarray*}
{\cal I}_{j} := f_{jk} ({\cal I}^0_{jk}) = [a_0 2^{-j}, \infty) \times [-2^{j-1}, 2^{j-1}].
\end{eqnarray*}

Let $S'_j$ be a Poisson process on  $\Omega=\{ (u, v): u>0,
-\infty < v < +\infty\}$ with intensity $\mu(2^{j}u)$. Applied to
$S\cap {\cal I}_{jk}^0$, the map $f_{jk}$ gives us a Poisson
process with the same distribution as $S'_j\cap {\cal I}_{j}$.

Now we make a substitution $(u, v)=f_{jk}(a,b)$ in the obtained sum in order to have the expectation in terms of the kernel $K_0$:
\begin{eqnarray}\label{eq:EwpS}
\E(|w_{jk}|^m \vert S) &\leqslant& \nu_m \sum_{\lambda \in S}
|K(\lambda, \lambda_{jk})|^m  \tau^m(a)\notag\\
 &=& \nu_m  \sum_{(u, v) \in S_j^{'}\cap {\cal I}_j} |K_0(u,v)|^m  \tau^m(2^j u).
\end{eqnarray}

\vspace{0.1cm}

2. Asymptotic behaviour of $\E (|w_{jk}|^m)$.

Denote the sum in (\ref{eq:EwpS})  by $Z_j^{(m)} = \sum_{(u,v)\in
S^{'}_j\cap {\cal I}_{j}} |K_0(u,v)|^m \tau^m(2^j u)$. Then, an
upper bound for the expectation of a wavelet coefficient can be
found as $\E |w_{jk}|^m \leqslant \nu_m \E Z_j^{(m)}$.

We can use the kernel properties  \eqref{eq:BoundK0} to
find the upper bounds for the summands of $Z_j^{(m)}$:
\begin{eqnarray}\label{eq:K0est}
  |K_0(u,v)|^m \tau^m(2^j u) &\leqslant& C_{K0}^m u^{-m (r+\rho+1/2)} \tau^m(2^j u) \quad \text{for} \quad u\geqslant 1;\\
 |K_0(u,v)|^m \tau^m(2^j u) &\leqslant& C_{K1}^m u^{m (r+\rho+1/2) } \tau^m(2^j u) \quad \text{for} \quad u\leqslant 1. \notag
\end{eqnarray}
Now, to estimate the expectation of $Z_j^{(m)}$, we apply the following version of
Corollary 1 by \citet{p:abr-sap-silv-00}, with $\varepsilon \mu$ being replaced by $\mu_{\varepsilon}$ (the proof of Corollary 1 and Lemma 1 in \citet{p:abr-sap-silv-00} goes through exactly with this replacement under the assumption that $c_{l, \varepsilon} \to 0$).

\begin{corollary}[Corollary 1 in \citet{p:abr-sap-silv-00}]\label{cor:ASS1}
Let $\mu_{\varepsilon}$ be a measure on a set $\Omega$, and let $g$ be a real-valued function on $\Omega$.
Let $S_{\varepsilon}$ be a Poisson process on $\Omega$ with intensity $ \mu_{\varepsilon}$, where $\varepsilon>0$.
Assume that the induced measure $\mu_{\varepsilon}(g_{\varepsilon}^{-1}(A))$ is non-atomic for every measurable set $A  \subseteq\mathbb R$, and assume that for some $l>0$,
\begin{equation*}
\int_{\Omega} \min(1, |g(x)|) \mu_{\varepsilon}(dx) < \infty, \quad c_{l, \varepsilon} =
\int_{\Omega}  |g_{\varepsilon}(x)|^l \mu_{\varepsilon}(dx) \to 0 \, \text{as} \,\,\varepsilon \to 0.
\end{equation*}
Define $Y_{\varepsilon} = \sum_{X\in S_{\varepsilon}} g_{\varepsilon}(X)$. Then
$\E |Y_{\varepsilon}|^l = c_{l, \varepsilon}[1 +o(1)]$  as
 $\varepsilon\rightarrow 0$.
\end{corollary}

We  apply the corollary with
 $X = (u,v)$, $\Omega = [0, \infty)\times (-\infty, +\infty)$,
 $d\mu_{\varepsilon}(u,v) = \mu(2^j u) du dv$, $\varepsilon = 2^{- j}$ and
$g_{\varepsilon}(X) = |K_0(u,v)|^m [\tau(2^{j}u)]^m \geqslant 0$ is a continuous function.
The assumptions of the corollary are satisfied if the integral $\int_{\Omega}  g_{\varepsilon}(x) d\mu_{\varepsilon}(x)$ tends to 0 for large $j$.
Using the bound (\ref{eq:K0est}) on $|K_0(u,v)|^m [\tau(2^j u)]^m$, we can estimate
the required integral:
\begin{eqnarray*}
c_{j} &=& \int_{\Omega}g_{2^{-j}}(x) d\mu_{2^{-j}}(x)  = \int_0^{+\infty}\int_{-\infty}^{+\infty} |K_0(u,v)|^m [\tau(2^j u)]^m du dv\\
&=&  \int_0^{1}\int_{-\infty}^{+\infty} |K_0(u,v)|^m [\tau(2^j u)]^m \mu(2^j u) du dv\\
&  +& \int_1^{+\infty}\int_{-\infty}^{+\infty} |K_0(u,v)|^m [\tau(2^j u)]^m \mu(2^j u)du dv\\
 &\leqslant&  C_{K0}^m \int_0^{1}  u^{m(r+\rho+1/2)}(1+1/u) [\tau(2^j u)]^m \mu(2^j u) du\\
&+& C_{K1}^m \int_1^{+\infty} u^{- m(r+\rho+1/2) }(1+1/u) [\tau(2^j u)]^m \mu(2^j u) du,
\end{eqnarray*}
since the kernel properties discussed in  Appendix \ref{sec:KernelProp} imply $v\in [- 1/u, 1]$.

Consider the second integral. Making the change of variables $z = 2^j u$ and using the assumption that functions $\tau(a)$ and $\mu(a)$ are monotonically decreasing, the integral is bounded by
\begin{eqnarray*}
&&C_{K1}^m 2^{j[m(r+\rho+1/2)-1] } \int_{2^j}^{+\infty}  z^{- m(r+\rho+1/2) }[\tau(z)]^m \mu(z) dz\\
&\leq& C_{K1}^m 2^{j[m(r+\rho+1/2)-1] }  \sum_{k=j}^{\infty}   2^{-k m(r+\rho+1/2)+k}[\tau(2^{k})]^m \mu(2^{ k})\\
 &\leq& C_{K1}^m  [\tau(2^j)]^m  \mu(2^j) \sum_{k=0}^{\infty}   2^{-k [m(r+\rho+1/2)-1]}= C C_{K1}^m  [\tau(2^j)]^m  \mu(2^j)
 \end{eqnarray*}
which tends to zero due to assumption $m(r+\rho+1/2)-1 >0$ and that function $\tau(a)$ or $\mu(a)$ monotonically decreases to 0.

Now consider the first integral. By the assumptions of the lemma, function $  \tau(z)  [\mu(z)]^{1/m}$ decreases for $z\geq a_0$ hence this function
 either decreases on $z\geq a_0$, or it has a maximum at some point $z_0 > a_0$ that is independent of $j$ which implies that for all $z\geq a_0$,
 $$
\int_{a_0}^{2^j} z^{ m (r+\rho+1/2)-1}[\tau(z)]^m \mu(z)dz \leq C[1  + 2^{ j m (r+\rho+1/2)-1}[\tau( 2^j)]^m \mu( 2^j)] 2^j
 $$
for some constant $C>0$.
Then,  \begin{eqnarray*}
&& \int_0^{1}  u^{m(r+\rho+1/2)-1}[\tau(2^j u)]^m \mu(2^j u) du =
2^{-j }\int_0^{2^j}  (2^{-j}z)^{ m (r+\rho+1/2)-1}[\tau(z)]^m \mu(z) dz\\
&\leq& 2^{-j m(r+\rho+1/2) }\int_0^{a_0} z^{ m (r+\rho+1/2)-1}[\tau(z)]^m \mu(z) dz\\
 &+& C 2^{-j m(r+\rho+1/2) }\int_{a_0}^{2^j} z^{ m (r+\rho+1/2)-1}[\tau(z)]^m \mu(z)  dz\\
&\leq& C 2^{-j [m(r+\rho+1/2) -1]} + C  [\tau( 2^j)]^m \mu( 2^j) \to 0
\end{eqnarray*}
as $j\to \infty$ since $m(r+\rho+1/2)-1 >0$ by the assumptions of the lemma.

Hence both assumptions of Corollary \ref{cor:ASS1} are satisfied implying that
$$
\E Z_j^{(m)} \leq C C_K^m [  2^{-j [m(r+\rho+1/2)-1] }+[\tau(2^j)]^m \mu(2^j)] 
$$
for large enough $j$ which gives a bound for the $m$th absolute moment of $w_{jk}$:
\begin{eqnarray}
\E|w_{jk}|^m \leq  \nu_m c C_K^m [  2^{-j [m(r+\rho+1/2)-1] }+   [\tau(2^j)]^m \mu(2^j) ]
\end{eqnarray}
for some constant $c>0$ which proves the lemma.

Since the kernels $W$ and $W_0$ defined in Appendix \ref{sec:KernelProp} have the same properties as the properties of
the kernels $K$ and $K_0$ we used in the proof, the same asymptotic result holds for $ \E|u_{j_0 k}|^m$.

\end{proof}

Now we prove  Theorem \ref{th:ContBesov_Gen}. Throughout the proof we use the following statement: $\E X <
\infty$ and $X>0$ imply $\PP\{ X < \infty\}=1$ which is due to Markov inequality.

\begin{proof}[Proof of Theorem \ref{th:ContBesov_Gen}]
{\it Sufficiency. }

\ref{it1:cwt}.  We need to prove that if the series converges then $|| w ||_{b_{p,q}^s}<\infty$ a.s. Conditions of  Lemma~\ref{lem:moment} are satisfied with $m=p$ and $S = s+1/2$.
Applying Lemma~\ref{lem:moment}, we have $||w||_{b^s_{p,q}}<\infty$ with probability 1 since its expectation is finite:
\begin{eqnarray*}
\E ||w||_{b^s_{p,q}} &=&  \E \left[\sum_{j=j_0}^{\infty} 2^{j q s'}  \left( \sum_{k=0}^{2^j-1} |w_{jk}|^p \right)^{q/p}\right]^{1/q} + \E \left( \sum_{k=0}^{2^{j_0}-1} |u_{j_0 k}|^p \right)^{1/p}\\
 &\leqslant&   \left[ C_{p,q}^q\sum_{j=j_0}^{\infty} 2^{j q s'} \left(  \sum_{k=0}^{2^j-1} \E |w_{jk}|^p \right)^{q/p}\right]^{1/q}+
\left( \sum_{k=0}^{2^{j_0}-1} \E |u_{j_0 k}|^p \right)^{1/p}\\
 &\leqslant&  C \left[ C_{p,q}^q\sum_{j=j_0}^{\infty} 2^{j q (s+1/2-1/p)} \left(2^j 2^{-j p(r+\rho+1/2) }  +  2^j 2^{ -j p (s+1/2 +1/q)} \right)^{q/p}\right]^{1/q}\\
&&+ 2^{j_0/p} c_p^{1/p} c_{j_0}^{1/p} + 2^{j_0/p} |C_w|\\
 &\leq& C C_{p,q} c_p^{1/p}\left[ \sum_{j=j_0}^{\infty}  [ 2^{-j q(r+\rho-s)  }+  2^{-j}   ] \right]^{1/q} + 2^{j_0/p} c_p^{1/p} c_{j_0}^{1/p} + 2^{j_0/p} |C_w|
\end{eqnarray*}
under the assumptions of the theorem as $r+\rho-s >\rho> 0$. 

\ref{it2:cwt}.  Similarly to the proof of case \ref{it1:cwt},
 we apply Lemma~\ref{lem:moment} with $m=p$ and $S = s+1/2$:
\begin{eqnarray*}
\E ||w||_{b^s_{p,\infty}} &=&  \E \sup_{j \geqslant j_0} 2^{js'}  \left( \sum_{k=0}^{2^j-1} |w_{jk}|^p \right)^{1/p} + \E \left( \sum_{k=0}^{2^{j_0}-1} |u_{j_0 k}|^p \right)^{1/p}\\
&\leqslant&  \sup_{j \geqslant j_0} 2^{js'} \left(  \sum_{k=0}^{2^j-1} \E |w_{jk}|^p \right)^{1/p}+
\left( \sum_{k=0}^{2^{j_0}-1} \E|u_{j_0 k}|^p \right)^{1/p}\\
 &\leqslant& c_p^{1/p}C\sup_{j\geqslant j_0} 2^{j s'}[ 2^{j/p} 2^{-j (r+\rho+1/2) }  +  2^{j/p} 2^{ -j  (s+1/2)} ]+ c_p^{1/p} c_{j_0} + 2^{j_0/p} |C_w|\\
  &=& c_p^{1/p}C\sup_{j\geqslant j_0} [   2^{-j (r+\rho-s) }  +   1 ]+ c_p^{1/p} c_{j_0} + 2^{j_0/p} |C_w|
\end{eqnarray*}
which is finite under the conditions of the theorem.
Therefore the sufficiency is proved.

{\it Necessity.}


The idea is to reduce the problem to the known case considered in Section \ref{sec:regularDWT}, i.e. to the model where
coefficients $w_{jk}$ are independent and whose distribution is a mixture of the point mass at zero and a scaled distribution $H$.
In the considered case $w_{jk}$ is a sum of the same independent random variables with different coefficients, therefore
$w_{jk}$ are  dependent:
\begin{equation}
w_{jk} = \sum_{\lambda \in S} K(\lambda, \lambda_{jk}) \omega_{\lambda}.
\end{equation}
To create independent wavelet coefficients, we divide the area $[a_0, \infty)\times [0,1]$ into non-overlapping rectangles
${\cal I}_{jk}$ and define new random variables $w_{jk}'$ as a sum  similar to the   sum defining $w_{jk}$  but over a restricted
summation area  $S\cap {\cal I}_{jk}$ instead of $S$:
$$
w_{jk}' = \sum_{\lambda \in S\cap {\cal I}_{jk}} K(\lambda, \lambda_{jk}) \omega_{\lambda}.
$$
Note that stochastically $|w_{jk}'|\leqslant |w_{jk}|$. Now we
define rectangles ${\cal I}_{jk}$. Since function $K_0(u, v)$ is
continuous and $K_0(1,0) =1$ we choose $c_0$ in $(0, 1)$ such that
$K_0(u,v)> 1/2$ for all $(u, v)$ in $[1, 1+c_0]\times [0, c_0] $
and define the rectangles as ${\cal I}_{jk} = [2^j, 2^j
(1+c_0)]\times [2^{-j}k , 2^{-j}(k+c_0)]$. Note that the
definition of $c_0$ implies that for any $j\geqslant j_0$,
$k=0,\dots, 2^j-1$, $K(\lambda, \lambda_{jk})> 1/2$ for $\lambda \in
{\cal I}_{jk}$.

Now we find  analogues of the proportion of non-zero coefficients
at level $j$ $\pi_j$ and the variance of a wavelet coefficient  at
level $j$ $\tau_j$ used in our model in Section
\ref{sec:mainBesov}. For sufficiently large $j$, the expected
number of wavelet indices $\lambda$ falling into the region ${\cal
I}_{jk}$ is $\int_{{\cal I}_{jk}} \mu(a) da db \leqslant c_0^2
\max\{\mu(2^j), \mu(2^j(1+c_0))\} = c_0^2 \mu(2^j)$ since $\mu(a)$
decreases for large $a$, therefore the probability that there is
at least one $\lambda$ in ${\cal I}_{jk}$  is $ c_1 \mu(2^j)$ for
some constant $c_1$.

Next we estimate the variance of $w_{jk}'$:
$$
\Var(\omega_{\lambda} \vert \lambda \in S\cap {\cal I}_{jk})
=\tau^2(a) \geqslant \tau^2(a(1+c_0)) \geqslant \tau^2(2a)
\geqslant 4 c_2 \tau^2(a)
$$
by Assumption B, so $\Var(K(\lambda, \lambda_{jk})\omega_{\lambda}
\vert \lambda \in S\cap {\cal I}_{jk})  \geqslant c_2 \tau^2(a)$
since $K(\lambda, \lambda_{jk})>1/2$ for $\lambda\in {\cal
I}_{jk}$. Therefore $\Var(w_{jk}')\geqslant c_3 \tau^2(a)$.

We define $w^0_{jk}$ for the same $j$ and $k$ as $w_{jk}$ such
that they are independent and have a mixture distribution
$$
w^0_{jk} \sim \pi_j h_j(x) + (1-\pi_j) \delta_0(x),
$$
where distribution $h_j$ has variance $\tau_j^2 = c_3 [\tau(2^{j})]^2$
and $\pi_j = \min\{1, c_1 \mu(2^{j})\}$.


By the construction of the wavelet coefficients $ w'_{jk}$ and $ w^{0}_{jk}$ the following inequalities hold
stochastically: $|w_{jk}| \geqslant |w'_{jk}| \geqslant |w^{0}_{jk} |$.
Therefore using Theorem \ref{th:mainBesov} which can be applied under the assumptions of the theorem and the proposition,
it follows that if the original function $f \in B^s_{p,q}$, i.e that
 $||w^0 ||_{b^s_{p,q}} \leqslant  ||w||_{b^s_{p,q}} <\infty$ almost surely, then $\delta<0$.
Thus the theorem and the proposition are proved.

\end{proof}


\subsection{Proof of sufficiency for $p=\infty$ and $\beta<1$ in Theorem~\ref{th:ContBesov}}\label{sec:ProofCWT_prop}

\begin{proof}

The necessity follows the proof of necessity in Theorem~\ref{th:ContBesov_Gen} exactly
since the necessary and sufficient condition in case $p=\infty$ and $\beta<1$ is the same as for the orthogonal expansion.

Now we prove the sufficiency of the statement  of Theorem~\ref{th:ContBesov} for $p=\infty$ and $\beta<1$.
The condition of the theorem in this case can be unified as $\tilde\delta<0$, where
 $\tilde\delta = s' - \alpha/2 + \delta_H$.
In the case of $H$ having a power exponential tail, $\tilde\delta$ coincides with $\delta = s' - \alpha/2 $ for $p=\infty$.

We need to show that if $\tilde\delta < 0$ then $||w||_{b_{\infty,q}^s}<\infty$, i.e.
that the following inequalities hold almost surely:
\begin{eqnarray}\label{cond:bes}
\max_{k=0 \dots 2^{j_0} -1} (|u_{j_0 k}|) + \sum_{j=j_0}^{\infty} 2^{s'qj} \{\max_{k=0 \dots 2^j-1} (|w_{jk}|) \}^q < \infty, \quad q< \infty, \\
\max_{k=0 \dots 2^{j_0} -1} (|u_{j_0 k}|) + \sup_{j\geqslant j_0}\{ 2^{js'} \max_{k=0 \dots 2^j-1} (|w_{jk}|)  \} <\infty, \quad q= \infty. \notag
\end{eqnarray}

For the proof we apply the Markov inequality that for
 a positive random variable $\xi$ and a constant $B>0$, $\PP\{ \xi > B \} \leqslant \E \xi / B$.
We consider probabilities $\PP\{ |w_{jk}| > A\}$ and $\PP\{ |u_{j_0 k}| > B\}$ for some $A, B > 0$. According to the Markov inequality,
\begin{eqnarray*}
\PP\{ |w_{jk}| >A\} &=& \PP\{ |w_{jk}|^\varkappa >A^\varkappa\} \leqslant A^{-\varkappa} \E |w_{jk}|^\varkappa,\\
\PP\{ |u_{j_0 k}| >B\} &\leqslant& B^{-\varkappa} \E |u_{j_0 k}|^\varkappa
\end{eqnarray*}
for any $\varkappa > 0$, and in the case of a distribution with polynomial tail there is an additional condition that  $\varkappa < l$.
We can use an upper bound for the expectation of the absolute moments of wavelet and scaling coefficients  given in Lemma \ref{lem:moment}:
\begin{eqnarray*}
\PP\{ |w_{jk}| >A\} &\leqslant& A^{-\varkappa} \E |w_{jk}|^{\varkappa} \leqslant c A^{-\varkappa}[ 2^{-j\varkappa(r+\rho+1/2)+j}+ 2^{-j(\beta+\varkappa\alpha/2)}];\\
\PP\{ |u_{j_0 k}| >B\} &\leqslant& c B^{-\varkappa}[ 2^{-j_0(\beta + \varkappa \alpha / 2 )} +  2^{-j_0\varkappa(r+\rho+1/2)+j_0} + c_{j_0} |C_w|]\leq C_u,
\end{eqnarray*}
as long as $\varkappa(r+\rho+1/2)>1$.
If we set $A = 2^{\gamma j}$ and $B=2^{\iota j_0}$ then the inequalities for the wavelet and scaling coefficients can be rewritten as
\begin{eqnarray*}
\PP\left\{ \max_{k=0\dots 2^j-1} |w_{jk}|>A \right\} &\leqslant& \sum_{k=0}^{2^j-1} \PP\{| w_{jk}| >A\} \leqslant  c 2^{j(-\gamma \varkappa +1 -\beta -\varkappa\alpha/2)}+ c 2^{-j\varkappa(r+\rho+1/2)+2j},\\
\PP\left\{ \max_{k=0\dots 2^{j_0}-1} |u_{j_0 k}|>B \right\} &\leqslant& c 2^{j_0(-\varkappa \iota+1 -\beta -\varkappa\alpha/2)}+ c2^{-j_0\varkappa(r+\rho+1/2)+2j_0} \leq C_u 2^{c j_0}.
\end{eqnarray*}

Therefore $\max_{k=0,\dots, 2^j-1} \{|w_{jk}|\} \leqslant A= 2^{\gamma j}$ for all $j$ almost surely if
\begin{equation}\label{eq:PrCond1}
\gamma -(1 -\beta)/\varkappa + \alpha/2  > 0\quad and \quad \varkappa(r+\rho+1/2)>2.
\end{equation}
A similar statement holds for the scaling coefficients,
i.e. $\max_{k} \{|u_{j_0 k}|\} \leqslant B=2^{\iota j_0}$ almost surely if
\begin{equation}\label{eq:PrCond2}
-\iota +(1 -\beta)/\varkappa - \alpha/2 < \infty,
\end{equation}
i.e. if $\iota >-\infty$ and $\varkappa \neq 0$.

Suppose that inequalities (\ref{eq:PrCond1}) and (\ref{eq:PrCond2})
hold. Then  conditions (\ref{cond:bes}) are satisfied if
\begin{eqnarray*}
&&\max_{k=0 \dots 2^{j_0} -1} (|u_{j_0 k}|) +  \sum_{j=j_0}^{\infty} 2^{s'qj} ||w_{j}||_{\infty}^q\\
&\leqslant& C_u 2^{j_0} + \sum_{j\geqslant j_0} 2^{qj(s' + \gamma)}+ \sum_{j\geqslant j_0} c 2^{-j\varkappa(r+\rho+1/2)+2j} <\infty
\end{eqnarray*}
in the case $q<\infty$, and if
\begin{eqnarray*}
&&\max_{k=0 \dots 2^{j_0} -1} (|u_{j_0 k}|) +\sup_{j\geqslant j_0}\{ 2^{js'} ||w_{j}||_{\infty} \}\\
 &\leqslant& C_u 2^{j_0} + c\sup_{j\geqslant j_0}\{ 2^{j(s' + \gamma) +c 2^{-j\varkappa(r+\rho+1/2)+2j}} \}<\infty
\end{eqnarray*}
in the case $q=\infty$ which both hold if $s'+\gamma < 0$.

To satisfy condition (\ref{eq:PrCond2}), we  take
  $\iota =(1 -\beta)/\varkappa + \alpha/2+1$ and choose $\varkappa\neq 0$ later.

Combining the requirement $s'+\gamma < 0$ with condition (\ref{eq:PrCond1}) we have that $\gamma$ should belong to the interval
$( (1 -\beta)/\varkappa - \alpha/2, -s')$. If there exists $\varkappa>2/(r+\rho+1/2)$ such that this interval is not empty then
the necessity is proved.
In case $\beta = 1$, the interval is not empty if and only if $\tilde \delta = s' - \alpha/2 <0$.

For $\beta < 1$, we consider the cases of distribution $H$ having polynomial or power exponential tail separately.

1. For a distribution with the power exponential tail $\tilde\delta = \delta = s' - \alpha/2$ and
the interval for $\gamma$ is $(\delta - s' + (1 -\beta)/\varkappa, -s')$ for some $\varkappa > 0$.
Since $\beta<1$ and $\delta < 0$ this interval is not empty if $\delta + (1 -\beta)/\varkappa<0$ i.e. if
 $\varkappa >-(1 -\beta)/\delta$.
For example, we can choose $\varkappa = \max\{2\frac{1 -\beta}{-\delta}, 3/(r+\rho+1/2)\}$ and $\gamma=-s' + \delta/4 \in (-s' + \delta+ (1 -\beta)/\varkappa, -s')$ since
$$ -s' + \delta/4 \geq -s' + \delta + \min(-\delta/2, (1-\beta)(r+\rho+1/2)/3) = -s' + \delta+ (1 -\beta)/\varkappa.$$

2. For a distribution with polynomial tail
 $\tilde \delta= s'-\alpha/2 +(1-\beta)/l$, and the conditions on $\varkappa$ and $\gamma$ are:
\begin{eqnarray*}
-s' + \tilde \delta + (1 -\beta)(\frac 1 {\varkappa} - \frac 1 l)
< \gamma < -s'  \quad \textsl{and} \quad  2/(r+\rho+1/2) < \varkappa < l.
\end{eqnarray*}
Thus, we must have $l >2/(r+\rho+1/2)$.

Since $\beta<1$, the interval for $\gamma$ is not empty if $\tilde
\delta + (1 -\beta)(\frac 1 {\varkappa} - \frac 1 l) < 0$ which
takes place for $\varkappa$ such that
$$
\frac 1 l < \frac 1 {\varkappa} < \min\left\{\frac 1 l +
\frac{-\tilde\delta}{1-\beta}, (r+\rho+1/2)/2\right\}.
$$
Thus there exists a pair $(\gamma, \varkappa)$ satisfying the
stated conditions. For instance, we can take $\varkappa$ and $\gamma$ such that
\begin{eqnarray*}
\frac{1}{\varkappa} &=& \frac 1 \ell + \frac 1 2 \left[\frac{-\tilde\delta}{1-\beta}, \frac{ r+\rho+1/2} 2 - \frac 1 \ell \right],\\
 \gamma &=& -s' + \tilde\delta+ \frac 1 2 \min\left\{ -\tilde\delta, (1-\beta) \left[ \frac{ r+\rho+1/2} 2 - \frac 1 \ell  \right]\right\}.
\end{eqnarray*}

Thus, if $\tilde\delta<0$, $|| w||_{b_{p, q}^s}$ is finite almost surely. Therefore the sufficiency, and hence the statement of the theorem in the considered  case, is proved.

\end{proof}

\section*{Acknowledgements}

This work has started when the author was a PhD student at the
University of Bristol, UK where she was supported by Overseas
Students Research Award Scheme. The author would like to give special thanks to Bernard Silverman for fruitful discussions,
support and encouragement during the PhD, and Theofanis Sapatinas for helpful
comments.


\end{document}